\newcommand{\1}{{\mathbf{1}}} 
\newcommand{\cA}{{\mathcal{A}}} 
\newcommand{\cB}{{\mathcal{B}}} 
\newcommand{\cC}{{\mathcal{C}}}
\newcommand{\cF}{{\mathcal{F}}}
\newcommand{\cK}{{\mathcal{K}}} 
\newcommand{\cM}{{\mathcal{M}}} 
\newcommand{\cN}{{\mathcal{N}}} 
\newcommand{\cP}{{\mathcal{P}}}
\newcommand{\cS}{{\mathcal{S}}} 
\newcommand{\cT}{{\mathcal{T}}}
\newcommand{\cX}{{\mathcal{X}}}
\newcommand{\N}{{\mathbb{N}}} 
\newcommand{\R}{{\mathbb{R}}} 
\renewcommand{\E}{{\mathbf{E}}} 
\renewcommand{\P}{{\mathbf{P}}} 
\DeclareMathOperator{\Val}{{\rm Val}}
\newcommand{\e}{{\rm{e}}}
\newcommand{\fed}{\,\rule{.1mm}{.24cm}\rule{.24cm}{.1mm}\,}
\newcommand{\bQ}{{\bf{Q}}}
\newcommand{\m}{{\bf {m}}}
\newcommand{\s}{{\mathbb S}}
\DeclareMathOperator{\mix}{mix}
\DeclareMathOperator{\type}{type}
\begin{document}

\title*{Valuations and Boolean Models}
\author{Julia H\"orrmann and Wolfgang Weil}
\institute{Julia H\"orrmann \at Research Group Stochastics/Didactics, Ruhr University Bochum, 44780 Bochum, Germany, \email{julia.hoerrmann@rub.de}
\and Wolfgang Weil \at Institute of Stochastics, Karlsruhe Institute of Technology, 76128  Karlsruhe, Germany, \email{wolfgang.weil@kit.edu}}
\maketitle

\abstract*{To be included later.}

\abstract{Valuations, as additive functionals, allow various applications in Stochastic Geometry, yielding mean value formulas for specific random closed sets and processes of convex or polyconvex particles. In particular, valuations are especially adapted to Boolean models, the latter being the union sets of Poisson particle processes. In this chapter, we collect mean value formulas for scalar- and tensor-valued valuations applied to Boolean models under quite general invariance assumptions.}

\section{Introduction}
\label{sec:1}

Hadwiger's characterization theorem for the intrinsic volumes (see \cite[Theorem 14]{S2015}) has important applications in integral geometry. Besides a kinematic formula for arbitrary continuous valuations on $\cK^n$, the celebrated principal kinematic formula was proved by Hadwiger using his characterization result. In its general form, the principal kinematic formula for the intrinsic volumes $V_j$ reads
\begin{align}\label{PKF}
\int_{G_n} V_j(K\cap gM)\mu(dg) = \sum_{k=j}^n c_{n,j,k} V_k(K)V_{n+j-k}(M),
\end{align}
for convex bodies $K,M\in\cK^n$, $j=0,\dots ,n$, and with given constants $c_{n,j,k}\ge 0$. In 1959, Federer proved a local version of \eqref{PKF}, for curvature measures, a notion he invented on the larger class of sets with positive reach. For both results, the global formula \eqref{PKF} and its local analog for curvature measures, a more direct proof was given in \cite{SW86} by splitting the integration over the motion group $G_n$ into a translation integral and a subsequent integration over the rotation group. This approach yielded also translative integral formulas for intrinsic volumes and curvature measures, introducing certain expressions of mixed type.

The need for translative integral formulas arose  with the development of stochastic geometry in the 1970s by Matheron and Miles (we refer here and in the following to the book \cite{SW}, for more details and specific references). In particular, in two important papers by Miles and Davy in 1976, the problem was discussed how geometric mean values for particles in a randomly overlapping system can be estimated from measurements at the union set. The formulas, which they proved, modelled the particle system by a stationary and isotropic Poisson process of convex bodies (a random countable subset of $\cK^n$ with rigid motion invariant distribution) and then used the principal kinematic formula. As a surprising result, in two and three dimensions, it was possible to estimate the mean number of particles (per unit volume) in an overlapping particle system by measuring the specific area, boundary length and Euler characteristic of the union set in a bounded planar sampling window (respectively, the volume, surface area, integral mean curvature and Euler characteristic in the spatial situation). In addition, also mean particle quantities were obtained (mean area and boundary length, respectively, mean volume, mean surface area and mean integral mean curvature). Such overlapping particle systems occurred frequently in microscopic investigations and became more and more important for techniques in image analysis. There, the random set model, given as the union of a Poisson particle process, the {\it Boolean model}, was not only used for systems with given real particles but also for spatially homogeneous random structures without that there were particles in the background. Then, the mean particle characteristics served as important parameters to find an appropriate distribution for a fictive particle process to adjust a Boolean model to the given structure.

For such applications, the assumption of stationarity (spatial homogeneity) was mostly acceptable, but the isotropy (rotation invariance) was often not fulfilled. This initiated the study of non-isotropic Boolean models, for which translative integral formulas were needed. In general dimensions, and for the intrinsic volumes, a corresponding system of formulas for stationary Boolean models was presented in \cite{W90}. A further important step was made in \cite{Fallert} by showing that the translative integral formulas for intrinsic volumes, in their local form for curvature measures, even produced mean value results in the non-stationary case. In the subsequent years, many related integral-geometric results were obtained, for mixed volumes, support functions, area measures, and applied to particle processes and Boolean models. Recently, translative integral formulas for general valuations $\varphi\in\Val$ and local versions for measure-valued valuations became available and corresponding mean value formulas for Boolean models were established in \cite{W2015a, W2015b}. As we shall see, some of these results can also be applied to tensor valuations.

In the following survey, we describe the interrelations between valuations, translative integral formulas and Boolean models and give appropriate references. We mostly concentrate on the stationary situation. After discussing the general results, we collect various examples in Section 6. The final Section 7 describes shortly extensions to non-stationary structures and also gives an outlook on the use of harmonic intrinsic volumes in the analysis of non-isotropic Boolean models, a development which was started in \cite{Hoerr}. In two and three dimensions measurements of the specific harmonic intrinsic volumes allow the estimation of the mean number of particles per unit volume and of the mean harmonic intrinsic volumes (which include in particular the usual intrinsic volumes). Thus, these recent results are a natural extension of the formulas by Miles and Davy from 1976 to the non-isotropic situation.

\section{Basic Definitions and Background Information}
\label{sec:2}

We consider the space $\cK^n$ of convex bodies in $\R^n, n\ge 2$, supplied with the Hausdorff metric $\delta(\cdot,\cdot)$ and the dense subset $\cP^n$ of convex polytopes. In the following, we study {\it valuations} on $\cK^n$ or $\cP^n$. These are mappings $\varphi :\cK^n \to \cX$ (or $\varphi : \cP^n\to \cX$), where $\cX$ is a commutative (topological) semigroup and which are additive in the sense that
$$
\varphi (K\cup M) + \varphi (K\cap M) = \varphi (K) + \varphi (M) ,
$$
whenever $K,M$ and $K\cup M$ lie in $\cK^n$ (respectively, in $\cP^n$). We concentrate on the situations where $\cX =\R$ (real valuations), $\cX = \cM(\R^n\times \s^{n-1})$ (measure valuations; here $\cM(\R^n\times \s^{n-1})$ is the space of finite signed Borel measures on $\R^n\times \s^{n-1}$),  and $\cX = \cT^n$ the space of tensors in $\R^n$ (tensor valuations).

\subsection{Real valuations}

Concerning real valuations, we concentrate on the class $\Val$ of translation invariant continuous valuations, in the following. The standard examples of valuations in $\Val$ are the {\it intrinsic volumes} $V_m(K), m=0,\dots ,n$, for $K\in\cK^n$. They are, in addition, invariant under rotations. McMullen \cite{McM77,McM93} has shown that every valuation $\varphi\in\Val$ admits a (unique) decomposition
\begin{equation}\label{val}
\varphi = \sum_{j=0}^{n} \varphi_j
\end{equation}
into $j$-homogeneous valuations $\varphi_j$ (which are again translation invariant and continuous). Here, $\varphi_0$ is a constant and
Hadwiger \cite{Had52} has proved that $\varphi_n = c_nV_n$. For $m=1,...,n-1$,  the vector space ${\Val}_m$ of $m$-homogeneous valuations is infinite-dimensional. In particular, McMullen \cite{McM80} has shown that $\varphi\in {\Val}_{n-1}$, if and only if
\begin{equation}\label{(n-1)case}
\varphi (K) = \int_{\s^{n-1}} f(u) S_{n-1} (K,du),\quad K\in{\cK}^n,
\end{equation}
for some continuous function $f= f_\varphi$ on $\s^{n-1}$ which is uniquely determined, up to a linear function (see \cite[Theorem 15]{S2015}). Here, $S_{n-1} (K,\cdot)$ is the area measure of $K$.

We also recall from \cite[Theorem 10]{S2015} that, for a polytope $P\in{\cal P}^n$ and $\varphi_j\in\Val_j, j=1,\dots ,n-1,$ we have
\begin{equation}\label{pol}
\varphi_j(P) = \sum_{F\in{\cal F}_j(P)} f_j(n( P,F)) V_j(F) ,
\end{equation}
where the summation is over all $j$-dimensional faces of $P$, $n(P,F)$ is the set of all unit vectors which are normals of $P$ at relative interior points of $F$ ($n(P,F)$ is a spherical polytope of dimension $n-j-1$) and $f_j$ is a simple valuation on the spherical polytopes of dimension $\le n-j-1$.

\subsection{Measure valuations}

Concerning measure valuations, we mention the {\it support measures} $\Lambda_j(K,\cdot), j=0,\dots ,n-1$, which are finite measures on $\R^n\times \s^{n-1}$, continuous with respect to the weak convergence of measures and  translation covariant in the sense that
$$
\Lambda_j(K,A\times B) = \Lambda_j(K+x,(A+x)\times B)
$$
for Borel sets $A\subset\R^n, B\subset \s^{n-1},$ and all $x\in\R^n$. They are also rotation covariant,
$$
\Lambda_j(K,A\times B) = \Lambda_j(\vartheta K,\vartheta(A\times B)), \quad \vartheta\in SO_n.
$$

For a continuous, translation covariant measure valuation $\varphi : \cK^n\to \cM (\R^n\times \s^{n-1})$ there is a decomposition similar to \eqref{val} proved in \cite{KW}. Namely,
\begin{equation}\label{measval}
\varphi = \sum_{j=0}^{n} \varphi_j
\end{equation}
with $j$-homogeneous measure valuations $\varphi_j$ (which are again translation covariant and continuous). Here, homogeneity means that
\begin{equation}\label{meashom}
\varphi(\alpha K,(\alpha A)\times B) = \alpha^j\varphi (K,A\times B)
\end{equation}
for Borel sets $A\subset\R^n, B\subset \s^{n-1},$ and all $\alpha\ge 0$.

The support measures give rise to two further series of measures, the {\it curvature measures} $C_0(K,\cdot),\dots ,C_{n-1}(K,\cdot)$ and the {\it area measures} $S_0(K,\cdot),\dots ,S_{n-1}(K,\cdot)$ of $K$. The former are (up to some constant) the projections of the support measures onto the first component and the latter are (up to the same constant) the projections onto the second component. For different normalizations of curvature and area measures, see \cite[Section 4]{S2015}.


\subsection{Tensor valuations}

The basic tensor valuations, the {\it Minkowski tensors} $\Phi_j^{r,s}(K)$ (of rank $r+s$), arise as (tensor) integrals of the support measures,
$$
\Phi_j^{r,s}(K) =c_{n-j}^{r,s} \int_{\R^n\times \s^{n-1}} x^ru^s \Lambda_j(K,d(x,u))
$$
where  $c_{k}^{r,s}:=\frac{1}{r!s!}\frac{\omega_{k}}{\omega_{k+s}}$ for $k\in\{1,\ldots,n\}$.
The Minkowski tensors have a covariance property with respect to translations (see \cite{HS15}).

\section{The Basic Equation for Boolean Models}
\label{sec:3}

The {\it Boolean model} is a random closed set $Z\subset\R^n$ which arises in a special way, as the union of sets (called {\it grains}) from a Poisson process $Y$. Usually, the grains are assumed to be compact or even compact and convex. More general random sets $Z$ can be considered, if $Y$ is an arbitrary point process on the class ${\cal C}^n$ of nonempty compact sets in $\R^n$ or on the subclass of convex bodies ${\cal K}^n$. In particular, if $Z$ and $Y$ are {\it stationary}, that is, have a distribution invariant under translations, the random set $Z$ can be interpreted as a {\it germ-grain model},
$$
Z:=\bigcup_{i=0}^\infty (x_i+Z_i) ,$$
 where points ({\it germs}) $x_1,x_2,\dots $ are distributed in $\R^n$ according to a stationary point process $X$ and then random compact (or compact, convex) sets $Z_i$ (the grains) are attached to the germs in a suitable way. We shall describe this construction in the next subsection, but will concentrate on the Poissonian case and convex sets, that is to Boolean models, where the grains are convex and independent from each other and from the underlying germ process $X$. These strong independence properties together with the fact that the realizations of $Z$ are locally polyconvex allow to apply valuations $\varphi$ to $Z$ and to express the expected value ${\E}\varphi (Z\cap K_0)$ in a bounded sampling window $K_0$ by the characteristic parameters of $X$ and the $Z_i$. This will be explained in the second subsection. The effective further investigation of Boolean models then requires formulas from Translative Integral Geometry, as they will be provided in Section 4. Background material on random sets, point processes and the integral geometric results as well as further material on Boolean models can be found in \cite{SW} and we refer to this book for all details which are not explained in the following.

\subsection{Boolean models}

Since we will only consider stationary Boolean models $Z$ with convex grains throughout the following, we start with a stationary Poisson process in $\R^n$. A stationary {\it point process} $X$ in $\R^n$ is a (simple) random counting measure $$X:=\sum_{i=1}^\infty \delta_{\xi_i},$$
 where $\delta_x$ denotes the Dirac measure in $x\in\R^n$ and where the $\xi_i$ are distinct random points in $\R^n$. We also assume that $X$ is locally finite meaning that ($\P$-almost surely) each $C\in \cC^n$ contains only finitely many points $\xi_i$ from $X$. Here, in the description, we already made use of the fact that such a point process can be represented in an alternative way, as a locally finite (random) closed set
 $$X=\{\xi_1,\xi_2,\dots \}\subset\R^n.$$
To make these definitions precise, we need an underlying probability space $(\Omega ,\cA,\P)$ and $\sigma$-algebras on the class $\cF^n$ of closed sets in $\R^n$, respectively on the class $\sf N$ of counting measures in $\R^n$. For details, we refer to \cite{SW} but mention that the former is chosen as the Borel $\sigma$-algebra $\cB(\cF^n)$ of the hit-or-miss topology on $\cF^n$  and the latter, $\cN$, is generated by the evaluation (or counting) maps
$$
\Phi_A : \eta \mapsto \eta (A),\quad A\in\cB (\R^n).
$$
The stationarity, which we assume in addition, means that $X+t$ has the same distribution as $X$, for all translations $t\in\R^n$. $X$ is {\it isotropic}, if the distribution of $X$ is invariant under rotations. (Here translations and rotations act in the natural way on counting measures, respectively on closed sets).

The (stationary) point process $X$ is a Poisson process, if $X(A)$ has a Poisson distribution for all bounded Borel sets $A\subset\R^n$,
$$
\P (X(A)=k) = \e^{-\gamma\,\lambda_n(A)}\frac{(\gamma\,\lambda_n(A))^k}{k!},\quad k=0,1,2,\dots
$$
Here $\gamma = \E X([0,1]^n)$ is the {\it intensity} of the Poisson process. It describes the mean number of points of $X$ per unit volume. Because of the stationarity we have $\E X(A) = \gamma\, \lambda_n(A)$, for all $A\in\cB(\R^n)$. As a consequence of the Poisson property, the random variables $X(A_1),\dots ,X(A_m)$ are (stochastically) independent if the Borel sets $A_1,\dots, A_m$ are disjoint. More generally, in this case, also the restrictions $X\fed A_1,\dots , X\fed A_m$ are independent random measures. The Poisson process $X$ is uniquely determined (in distribution) by the parameter $\gamma$. Since the Lebesgue measure $\lambda_n$ is rotation invariant, $X$ is isotropic.

Now assume that $X$ is a stationary Poisson process with intensity $\gamma >0$, enumerated (in a measurable way) as $X=\{\xi_1,\xi_2,\dots\}$. Let $\bQ$ be a probability measure on $\cK^n$ (supplied with the Borel $\sigma$-algebra with respect to the Hausdorff metric) and let $Z_1,Z_2,\dots$, be a sequence of independent random convex bodies with distribution $\bQ$ (and independent of the Poisson process $X$). Then
$$
Z= \bigcup_{i=1}^\infty (\xi_i+Z_i)
$$
is a stationary random set, a {\it Boolean model}. Some additional assumptions are helpful. First, we require that
\begin{equation}\label{intcond}
\int_{\cK^n} V_n(K+B^n)\,\bQ(dK) <\infty ,
\end{equation}
since then $Z$ is a closed set (and moreover $Z\cap K$ is polyconvex for each $K\in{\cK^n}$). Second, we assume that $\bQ$ is concentrated on the {\it centered} convex bodies $\cK^n_c$ (the class of bodies $K\in\cK^n$ with center of the circumsphere at the origin). The effect of this condition is that $\bQ$ is uniquely determined by $Z$ and that $Z$ is isotropic if and only if $\bQ$ is invariant under rotations.

For the following it is often convenient to use the particle process $Y=\{\xi_1+Z_1,\xi_2+Z_2,\dots\}$. This is a point process on the locally compact space $\cK^n$ (that is, a (simple) random counting measure on $\cK^n$ or, equivalently, a locally finite random closed subset of $\cK^n$). The process $Y$ also has the  Poisson property, that means, the random number $Y(A)$ of particles from $Y$ in a Borel set $A\subset\cK^n$ has a Poisson distribution. Later we will use this for the sets
$$
\cK_C := \{ K\in\cK^n : K\cap C\not= \emptyset\},\quad C\in\cC^n.
$$

\begin{proposition}\label{3.1.1} For $A\in\cB(\cK^n)$, we have
$$
\P (Y(A)=k) = \e^{-\Theta (A)}\frac {(\Theta (A))^k}{k!},\quad k=0,1,\dots ,
$$
where $\Theta$ is the image measure (on $\cK^n$) of $\gamma\,\lambda_n\otimes\bQ$ under the mapping $\Phi : \R^n\times\cK^n_c\to \cK^n, (x,K)\mapsto x+K$.
\end{proposition}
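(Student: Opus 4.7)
The plan is to realize $Y$ as the image, under the translation map $\Phi(x,K)=x+K$, of an independently marked Poisson process, and then invoke the standard marking and mapping theorems for Poisson processes. First I would form the marked process
\[
\widetilde X := \sum_{i=1}^\infty \delta_{(\xi_i,Z_i)}
\]
on $\R^n\times\cK^n_c$. Because $X$ is a stationary Poisson process on $\R^n$ with intensity measure $\gamma\lambda_n$ and the marks $Z_1,Z_2,\dots$ are i.i.d.\ with distribution $\bQ$ and independent of $X$, the marking theorem (see \cite{SW}) yields that $\widetilde X$ is a Poisson process on $\R^n\times\cK^n_c$ with intensity measure $\gamma\lambda_n\otimes\bQ$. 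Applying next the mapping theorem to $\Phi$, the image
\[
Y = \Phi(\widetilde X) = \sum_{i=1}^\infty \delta_{\xi_i+Z_i}
\]
is a Poisson process on $\cK^n$ whose intensity measure is exactly the image measure $\Theta$ from the statement, and the claimed formula for $\P(Y(A)=k)$ then follows directly from the defining Poisson property.

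The one point that needs to be checked is that $\Theta$ is locally finite on $\cK^n$, since otherwise $Y$ would not be a well-defined point process on $\cK^n$ and the mapping theorem would not apply. For $C\in\cC^n$, Fubini together with the translation-invariance of Lebesgue measure gives
\[
\Theta(\cK_C) = \gamma\int_{\cK^n_c}\int_{\R^n}\1\{(x+K)\cap C\neq\emptyset\}\,\lambda_n(dx)\,\bQ(dK) = \gamma\int_{\cK^n_c}V_n(K+(-C))\,\bQ(dK).
\]
Choosing $r>0$ with $-C\subset rB^n$, the Steiner formula bounds $V_n(K+rB^n)\le\max(1,r^n)\,V_n(K+B^n)$, so the integrability assumption \eqref{intcond} forces $\Theta(\cK_C)<\infty$.

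The main obstacle is essentially bookkeeping: verifying measurability of the marked-process construction on $(\Omega,\cA,\P)$, and checking that the integrability assumption \eqref{intcond} translates into local finiteness of $\Theta$. Once these points are in place, Proposition~\ref{3.1.1} is a direct consequence of the standard Poisson marking and mapping theorems.
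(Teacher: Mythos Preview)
Your argument is correct, and it takes a genuinely different route from the paper's. The paper proceeds by a direct elementary computation: it reduces, via the measure extension theorem and the fact that $\Phi$ is a homeomorphism, to sets of the form $A=\Phi(B\times C)$ with $B\in\cB(\R^n)$, $C\in\cB(\cK^n_c)$, and then computes $\P(Y(A)=k)$ explicitly by conditioning on $X(B)=j$ (Poisson with mean $\gamma\lambda_n(B)$) and using that, given $j$ germs in $B$, the number whose mark lies in $C$ is binomial with parameters $j$ and $\bQ(C)$; summing the resulting series yields the Poisson probability with parameter $\gamma\lambda_n(B)\bQ(C)=\Theta(A)$. Your approach instead packages this into two standard theorems---independent marking and mapping of Poisson processes---and then verifies the one hypothesis (local finiteness of $\Theta$) that is not automatic. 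What the paper's approach buys is self-containment: the reader sees the Poisson--binomial thinning mechanism explicitly, and no external theorems are invoked beyond measure extension. What your approach buys is brevity and conceptual clarity, at the cost of relying on the cited machinery; it also makes the role of condition \eqref{intcond} more transparent, since the paper defers the analogous finiteness check to the proof of Theorem~\ref{Th2}. One small point you left implicit: since $\Phi$ is a bijection (indeed a homeomorphism, as the paper notes), the image process $Y$ is automatically simple, which is needed for the point-process framework used here.
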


\begin{proof}\smartqed
By the extension theorem of measure theory, and since $\Phi$ is a homeomorphism, it is sufficient to prove the result for sets $A=\Phi (B\times C), B\in {\cB}(\R^n), C\in {\cB}(\cK^n_c)$. In this case, the independence properties of $Z$ yield
\begin{align*}
\P (Y(A)=k) &= \P \left(\sum_{i=1}^\infty \1\{\xi_i\in B, Z_i\in C\} =k\right)\\
&= \sum_{j=k}^\infty \P (X(B)=j){j\choose k}\bQ (C)^k(1-\bQ (C))^{j-k}\\
&= \e^{-\gamma\,\lambda_n(B)}\bQ (C)^k\sum_{j=k}^\infty {j\choose k}(1-\bQ (C))^{j-k}\frac{(\gamma\,\lambda_n(B))^j}{j!}\\
&= \e^{-\gamma\,\lambda_n(B)}\bQ (C)^k\frac{(\gamma\,\lambda_n(B))^k}{k!}\sum_{i=0}^\infty (1-\bQ (C))^{i}\frac{(\gamma\,\lambda_n(B))^i}{i!}\\
&= \e^{-\gamma\,\lambda_n(B)}\frac{(\gamma\,\lambda_n(B)\bQ (C))^k}{k!} \e^{\gamma\,\lambda_n(B)-\gamma\,\lambda_n(B)\bQ (C)}\\
&= \e^{-\Theta(A)}\frac{(\Theta (A))^k}{k!} .
\end{align*}
\qed\end{proof}

We emphasize the fact that the (stationary) Boolean model $Z$ is uniquely determined (in distribution) by the two quantities $\gamma$ (a constant which we always assume to be $>0$) and $\bQ$ (a probability measure on $\cK^n_c$). Thus, in order to fit a Boolean model to given data (in form of closed sets in a window $K_0$, say), one has to determine (more precisely, to estimate) $\gamma$ and $\bQ$ from the data, that is from observations of realizations $Z(\omega)\cap K_0$ of $Z$ in $K_0$.

\subsection{Additive functionals}

Concerning the estimation problem described above, let us assume that we observe $\varphi (Z(\omega)\cap K_0)$ for some geometric functional $\varphi$ in the window $K_0$. The mean value $\E \varphi (Z\cap K_0)$ is then the quantity which can be estimated unbiasedly by $\varphi (Z(\omega)\cap K_0)$. It is natural to assume that the window is convex, hence $K_0\in\cK^n$. Then $Z\cap K_0$ is polyconvex a.s. Therefore it is another natural assumption that $\varphi$ is additive, hence a valuation. Since $Z$ is stationary, the location of the window should not matter, therefore we may also assume $\varphi$ to be translation invariant. Finally, to have a smooth behavior with respect to approximations (at least on $\cK^n$), we assume that $\varphi$ is continuous on $\cK^n$. Hence, we consider now $\E \varphi (Z\cap K_0)$, for $K\in\cK^n$ and $\varphi\in\Val$.

The following result from \cite{WW} (see also \cite[Theorem 9.1.2]{SW}), expresses $\E \varphi (Z\cap K_0)$ in terms of $\gamma$ and $\bQ$.

\begin{theorem}\label{Th2}
Let $Z$ be a stationary Boolean model in $\R^n$ with convex grains, let $K_0\in\cK^n$ be a sampling window and $\varphi\in\Val$. Then $\E |\varphi (Z\cap K_0)|<\infty$ and
$$
\E \varphi (Z\cap K_0) = \sum_{k=1}^\infty \frac{(-1)^{k-1}}{k!}\gamma^k\int_{\cK^n_c}\cdots \int_{\cK^n_c}
\Phi(K_0,K_1,\dots,K_k)\,\bQ(dK_1)\cdots \bQ(dK_k)
$$
with
\begin{align}
\Phi(K_0,K_1,\dots,K_k):= &\int_{(\R^n)^k}\varphi (K_0\cap (K_1+x_1)\cap\dots\nonumber\\
&\ldots \cap(K_k+x_k)) \, \lambda_n^k(d(x_1,\dots ,x_k)) \, .\label{itint}
\end{align}
\end{theorem}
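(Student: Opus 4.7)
The plan is to write $Z\cap K_0$ as a finite union of sets of the form $K_0\cap(\xi_i+Z_i)$ almost surely, to expand $\varphi(Z\cap K_0)$ via inclusion-exclusion (using the additive extension of $\varphi$ to the convex ring, which exists by Groemer's extension theorem because $\varphi\in\Val$ is continuous on $\cK^n$), and then to take the expectation, using the $k$-th order Campbell formula for the Poisson particle process $Y$ introduced before Proposition~\ref{3.1.1}.

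First, observe that the number of grains meeting $K_0$ is $Y(\cK_{K_0})$, which by Proposition~\ref{3.1.1} is Poisson distributed with parameter
\[
\Theta(\cK_{K_0})=\gamma\int_{\cK^n_c}\lambda_n(K_0-K)\,\bQ(dK)
\le\gamma\int_{\cK^n_c}\lambda_n\bigl((K_0+B^n)+(K+B^n)\bigr)\,\bQ(dK),
\]
which is finite by the integrability assumption \eqref{intcond}. Hence, a.s., only finitely many indices $i$ satisfy $(\xi_i+Z_i)\cap K_0\neq\emptyset$, and Groemer's extension gives
\[
\varphi(Z\cap K_0)=\sum_{k=1}^\infty(-1)^{k-1}\sum_{\{i_1<\cdots<i_k\}}\varphi\bigl(K_0\cap(\xi_{i_1}+Z_{i_1})\cap\cdots\cap(\xi_{i_k}+Z_{i_k})\bigr),
\]
where each inner sum is effectively finite. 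Note that each summand vanishes unless all the chosen grains hit $K_0$, and then the intersection is contained in $K_0$, where $\varphi$ is bounded by some constant $c$ depending only on $K_0$ (by continuity of $\varphi$ on the compact set $\{K\in\cK^n:K\subset K_0\}$ in the Hausdorff metric, together with the extension).

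Next, I would take expectations term-by-term, applying the $k$-th order Campbell theorem to the Poisson process $Y$ (whose $k$-th factorial moment measure is $\Theta^k$) to get
\[
\E\sum_{\{i_1<\cdots<i_k\}}\varphi\bigl(K_0\cap M_{i_1}\cap\cdots\cap M_{i_k}\bigr)=\frac{1}{k!}\int_{\cK^n}\!\!\cdots\!\int_{\cK^n}\!\varphi\bigl(K_0\cap M_1\cap\cdots\cap M_k\bigr)\,\Theta(dM_1)\cdots\Theta(dM_k).
\]
Substituting $\Theta$ by its definition as the image measure of $\gamma\lambda_n\otimes\bQ$ under $(x,K)\mapsto x+K$, and using Fubini, each term becomes precisely $\tfrac{\gamma^k}{k!}\int_{\cK^n_c}\!\!\cdots\!\int_{\cK^n_c}\Phi(K_0,K_1,\dots,K_k)\,\bQ(dK_1)\cdots\bQ(dK_k)$. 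Inserting the sign $(-1)^{k-1}$ yields the asserted formula.

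The main obstacle is the justification of the interchange of expectation and the infinite sum, which simultaneously establishes $\E|\varphi(Z\cap K_0)|<\infty$. The argument is to bound the absolute value of the random $k$-th term by $c\binom{N}{k}\1\{N\ge k\}$, where $N:=Y(\cK_{K_0})$ is Poisson with the finite parameter computed above. Summing yields an a.s.\ bound of $c\cdot 2^N$, whose expectation is finite because a Poisson random variable has finite exponential moments. This allows the application of dominated convergence (or Fubini) to exchange sum and expectation; the same bound ensures that each iterated integral defining $\Phi(K_0,K_1,\ldots,K_k)$ is finite and integrable with respect to $\bQ^k$.
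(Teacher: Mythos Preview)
Your proposal is correct and follows essentially the same approach as the paper: inclusion--exclusion for the a.s.\ finite union, the bound $|\varphi(Z\cap K_0)|\le c\cdot 2^N$ with $N=Y(\cK_{K_0})$ Poisson to justify dominated convergence, and then the Campbell/Slivnyak-type identity (the paper phrases it as the intensity measure of $Y^k_{\neq}$ being $\Theta^k$, you as the $k$-th factorial moment measure) followed by substitution of the explicit form of $\Theta$. One small slip: your inequality $\lambda_n(K_0-K)\le\lambda_n\bigl((K_0+B^n)+(K+B^n)\bigr)$ is not literally correct (the right-hand side involves $K$, not $-K$), but the finiteness of $\Theta(\cK_{K_0})$ does follow from \eqref{intcond}, e.g.\ via the paper's bound $V_n(K+(-K_0))\le \max\{r(K_0),1\}^n\,V_n(K+B^n)$.
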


\begin{proof} \smartqed We sketch the proof since it sheds some light on the role of the Poisson assumption underlying the Boolean model. To simplify the formulas, we use the particle process $Y=\{\xi_1+Z_1,\xi_2+Z_2,\dots\}$.

Almost surely the window $K_0$ is hit by only finitely many grains $M_1,\dots ,M_\nu\in Y$ (here $\nu$ is a random variable). The additivity of $\varphi$ implies
\begin{align}
\varphi (Z\cap K_0) &= \varphi \left(\bigcup_{i=1}^\nu M_i \cap K_0\right)\nonumber\\
&= \sum_{k=1}^\nu (-1)^{k-1} \sum_{1\le i_1<\dots<i_k\le\nu}\varphi (K_0\cap M_{i_1}\cap\dots\cap M_{i_k})\nonumber\\
&= \sum_{k=1}^\infty \frac{(-1)^{k-1}}{k!}\sum_{(N_1,\dots ,N_k)\in Y^k_{\not=}} \varphi (K_0\cap N_{1}\cap\dots\cap N_{k}).\label{inclexcl}
\end{align}
Here, $Y^k_{\not=}$ denotes the set of all $k$-tupels of pairwise distinct bodies in $Y$ and we could extend the summation to infinity since $\varphi (\emptyset) =0$.

The continuity of $\varphi$ on $\cK^n$ implies $|\varphi (M)|\le c(K_0)$ for all $M\in\cK^n, M\subset K_0$. Hence,
$$
|\varphi (Z\cap K_0)| \le \sum_{k=1}^\nu {\nu\choose k}c(K_0) \le 2^\nu c(K_0).
$$
From Proposition \ref{3.1.1} we get
\begin{align*}
\E 2^\nu &= \sum_{k=0}^\infty 2^k \P (Y(\cK_{K_0})=k)\\
&=\e^{-\Theta (\cK_{K_0})} \sum_{k=0}^\infty \frac{(2\Theta (\cK_{K_0}))^k}{k!}\\
&= \e^{\Theta (\cK_{K_0})} <\infty
\end{align*}
since
\begin{align*}
\Theta (\cK_{K_0}) &= \gamma \int_{\cK^n_c}\int_{\R^n} \1\{(x+K)\cap K_0\not=\emptyset\} \,\lambda_n(dx)\bQ(dK)\\
&= \gamma \int_{\cK^n_c}V_n(K+(-K_0))\,\bQ(dK)\\
&\le \gamma \max\{r(K_0),1\}^n \int_{\cK^n_c}V_n(K+B^n)\, \bQ(dK) <\infty ,
\end{align*}
due to condition \eqref{intcond} (here $r(K_0)$ is the circumradius of $K_0$). Hence $\E |\varphi (Z\cap K_0)|<\infty$.

This integrability property allows to use the dominated convergence theorem for $\E \varphi (Z\cap K_0)$, where $\varphi (Z\cap K_0)$ is expressed by formula \eqref{inclexcl}, and interchange expectation and summation. We get
$$
\E \varphi (Z\cap K_0) = \sum_{k=1}^\infty \frac{(-1)^{k-1}}{k!}\E\sum_{(N_1,\dots ,N_k)\in Y^k_{\not=}} \varphi (K_0\cap N_{1}\cap\dots\cap N_{k}).
$$
Now we use the Campell theorem for point processes \cite[Theorem 3.1.2]{SW} (applied to the special point process $ Y^k_{\not=}$ on $(\cK^n_c)^k$) and the fact that the intensity measure of $Y^k_{\not=}$, for a Poisson process $Y$, is the product measure $\Theta^k$ of $\Theta$ (see \cite[Corollary 3.2.4]{SW}). We obtain
\begin{align}
\E \varphi (Z\cap K_0) &= \sum_{k=1}^\infty \frac{(-1)^{k-1}}{k!}\int_{\cK_c^n}\dots \int_{\cK_c^n} \varphi (K_0\cap N_{1}\cap\dots \nonumber\\
&\quad\ldots \cap N_{k})\, \Theta (dN_1)\cdots\Theta (dN_k). \label{basicint}
\end{align}
Inserting the special form of $\Theta$ now yields the result. \qed
\end{proof}

{\bf Remarks.} {\bf 1.} There is a simple and obvious generalization of the last theorem to Boolean models with polyconvex grains, if the integrability condition \eqref{intcond} is modified appropriately (the number of convex bodies which constitute the typical polyconvex grain should be limited). There is also another, less obvious generalization to non-stationary Boolean models. This requires to consider a general Poisson process $Y$ on $\cK^n$ where the intensity measure $\Theta$ can have a more general form (this induces that the underlying Poisson process $X$ in $\R^n$ is also not stationary anymore). Formula \eqref{basicint} then still holds, provided $\Theta$ is {\it translation regular}. We will explain this and give more results in Section 8.

{\bf 2.} Due to the stationarity and the independence properties of the Poisson process $Y$, the grains $M_1, M_2,\dots \in Y$  are almost surely in general relative position. This implies that geometric functionals $\varphi$ on $\cK^n$ or $\cP^n$ can have an additive extension to the polyconvex set $Y\cap K_0$, although they are not valuations. Examples are the local functionals on   $\cP^n$ considered in \cite{W2015a}. For them, Theorem \ref{Th2} still holds for Boolean models with polytopal grains. A $j$-homogeneous local functional $\varphi_j(P), P\in\cP^n$, of interest is the total content of the $j$-dimensional skeleton of $P$,
$$
\varphi_j(P) = \sum_{F\in{\cF}_j(P)} V_j(F) .
$$
For $j=0,\dots ,n-2$, this functional is not additive on $\cK^n$. Since we concentrate on valuations in this chapter, we will not discuss general local functionals further and refer to \cite{W2015a}, for information (but observe the remarks on local extensions of valuations in Section 4.1).

\section{Integral Geometry for Valuations}
\label{sec:4}

In order to simplify the expectation formula in Theorem \ref{Th2}, it would be helpful to have a more explicit expression for the (iterated) translative integral in \eqref{itint}. This is obtained in the following subsection. In the second subsection, we discuss kinematic formulas.

\subsection{The translative formula}

Some results on translative integrals in dimension 2 and 3 are due to Blaschke, Berwald and Varga in 1937. A first general translative integral formula for intrinsic volumes in $\R^n$ and their local counterparts, the curvature measures, was obtained in \cite{SW86} and the iterated version was proved in \cite{W90} (see the Notes for Section 6.4 in \cite{SW}, for further references, variations and extensions).

The $k$-fold iterated translative integral formula for the intrinsic volume $V_j$ involved mixed functionals which were denoted by $V^{(j)}_{m_1,\dots ,m_k}$ (where the parameters satisfy $m_1+\dots +m_k=(k-1)n+j$), a notation which was subsequently used also for various related results on support measures and other local functionals. In the sequel, we use a special notation, which was introduced in \cite{Hoerr} to simplify the resulting formulas. First, we observe that the exponent $(j)$ in such mixed expressions can be determined from $j= m_1+\dots +m_k-(k-1)n$ and is therefore redundant. Then, we introduce a multi-index $\m = (m_1,\dots ,m_k)$ from the class
\[
\mix(j,k):=\{\m =(m_1,\ldots,m_k)\in \{j,\ldots,n\}^k:m_1+\ldots+m_k=(k-1)n+j\},
\]
 for $j\in\{0,\ldots,n\}$ and $k\in \N$, and abbreviate the mixed functional $\varphi_{m_1,\dots ,m_k}$ by $\varphi_{\m}$.
For $\m\in \mix(j,k)$, we also write $\type(\m):=j$ and $|\m|:=k$.

The following theorem was obtained in \cite{W2015b}, based on a corresponding result for polytopes in \cite{W2015a}.

\begin{theorem}\label{trans4} For $\varphi\in\Val$, let  $\varphi_j$ be its $j$-homogeneous part, $j=0,...,n$, with $\varphi_n =c_nV_n$.  Then, for $k\ge 2$, there exist mixed functionals $\varphi_{\m}$, $\m\in\mix(j,k)$, on ${({\cK^n})^{k}}$ such that for convex bodies $K_1,...,K_k \in {\cK^n}$,
\begin{align}
  \int_{({\mathbb R}^n)^{k-1}}&
\varphi_j(K_1\cap (K_2+{x_2})\cap \dots \cap (K_k+{x_k}))\, \lambda_n^{k-1} (d(x_2,\dots ,x_k))\nonumber \\
& = \sum_{\m\in\mix(j,k)}
\varphi_{\m}(K_1,\dots,K_k).\label{ittrans}
\end{align}
For $(m_1,\ldots,m_k)\in\mix(j,k)$ the mapping $(K_1,...,K_k)\mapsto \varphi_{m_1,\ldots,m_k}(K_1,\dots,K_k)$ is symmetric (w.r.t. permutations of the indices $1,...,k$), it is homogeneous of degree $m_i$ in $K_i$ and it is a valuation in $\Val$ in each of its variables $K_i$.
\end{theorem}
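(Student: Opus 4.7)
The plan is to prove the formula first for polytopes via an explicit face-counting argument based on the representation \eqref{pol}, and then extend to general convex bodies by continuity and a multi-homogeneity extraction.

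\textbf{Step 1: the case $k=2$ for polytopes.} Take $P_1,P_2\in\cP^n$. For Lebesgue-almost every $x\in\R^n$ the polytopes $P_1$ and $P_2+x$ lie in general relative position, so the nonempty $j$-faces of $P_1\cap(P_2+x)$ are precisely the intersections $F_1\cap(F_2+x)$ with $F_i\in\cF_{m_i}(P_i)$ and $m_1+m_2=n+j$, and the normal cone of such a face in the intersection polytope is the spherical join of $n(P_1,F_1)$ and $n(P_2,F_2)$. Substituting into \eqref{pol}, interchanging sum and integral, and invoking the classical translative identity
$$\int_{\R^n} V_j(F_1\cap(F_2+x))\,\lambda_n(dx)=[F_1,F_2]\,V_{m_1}(F_1)V_{m_2}(F_2),$$
where $[F_1,F_2]$ depends only on the affine hulls of $F_1,F_2$, yields the $k=2$ case of \eqref{ittrans} with $\varphi_{m_1,m_2}(P_1,P_2)$ given explicitly as a double sum over face pairs. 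Symmetry, $m_i$-homogeneity in $P_i$, and the valuation property in each argument can then be read directly off the explicit formula.

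\textbf{Step 2: iteration to general $k$ for polytopes.} Proceed by induction on $k$. Fix $P_1,\dots,P_{k-1}$ and, using the $k=2$ identity, first integrate over $x_k$ with $Q(x_2,\dots,x_{k-1}):=P_1\cap(P_2+x_2)\cap\cdots\cap(P_{k-1}+x_{k-1})$ playing the role of the first body and $P_k$ the second; then integrate over $(x_2,\dots,x_{k-1})$ and apply the inductive hypothesis to each homogeneous term that appears. Reorganising the iterated face sum by the total multi-index $\mathbf{m}\in\mix(j,k)$ produces \eqref{ittrans} on $(\cP^n)^k$ and defines $\varphi_{\mathbf{m}}$ as an explicit combinatorial functional of the participating faces and their normal cones.

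\textbf{Step 3: extension to $\cK^n$.} Given $K_1,\dots,K_k\in\cK^n$, the integrand on the left of \eqref{ittrans} is continuous by continuity of $\varphi\in\Val$, and vanishes outside a bounded set of translations controlled by the circumradii of the $K_i$, so the LHS itself is continuous in each $K_i$ separately. To isolate each mixed functional, dilate $K_i\mapsto \alpha_i K_i$ with $\alpha_i>0$. Iterating McMullen's homogeneous decomposition \eqref{val} argument by argument exhibits the LHS as a polynomial in $(\alpha_1,\dots,\alpha_k)$; the only monomials that can arise have total degree $j+(k-1)n$ with $\alpha_i$-degree in $\{j,\dots,n\}$, i.e.\ multi-indices in $\mix(j,k)$. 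Reading off the coefficient of $\alpha_1^{m_1}\cdots\alpha_k^{m_k}$ \emph{defines} $\varphi_{\mathbf{m}}(K_1,\dots,K_k)$; on polytopes this agrees with the expression from Step 2 by uniqueness of the polynomial expansion. The extraction transfers continuity, $m_i$-homogeneity and the valuation property in each argument from the LHS to $\varphi_{\mathbf{m}}$, and symmetry extends from the polytopal case by density.

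\textbf{Main obstacle.} The polytopal steps reduce, after a careful bookkeeping of faces and normal cones, to the classical volume translative formula. The genuinely technical part is Step 3: although the LHS obviously has all desired qualitative properties, splitting it into multi-homogeneous pieces with the prescribed degrees $m_i$ requires applying McMullen's decomposition repeatedly in each variable with the others held fixed and checking that the resulting components are individually continuous and additive in every argument simultaneously. The restriction of indices to $\mix(j,k)$ must also be justified: the $k-1$ translative integrations absorb exactly $(k-1)n$ units of degree, which forces $m_1+\cdots+m_k=(k-1)n+j$ and the upper bound $m_i\le n$, while the lower bound $m_i\ge j$ follows from the vanishing of $\varphi_j$ on bodies of lower dimension.
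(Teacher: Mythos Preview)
Your proposal is correct and follows essentially the same route as the paper: establish the formula for polytopes via the face decomposition \eqref{pol} and the elementary translative integral for $V_j$ of transversal face intersections, then extend to general bodies by exploiting the polynomial dependence on dilation factors. Two minor differences are worth noting: the paper carries out the polytopal step through an associated \emph{measure} valuation $\Phi_j(P,\cdot)=\sum_F f_j(n(P,F))\lambda_F$ (which costs nothing extra and yields the local formula \eqref{ittrans2} as a by-product), and for the extension it approximates by polytopes and argues that the coefficients of the polynomials $J(rP_i,sQ_i)$ must converge, rather than invoking McMullen's decomposition iteratively on the integral $J$ itself as you do---both mechanisms isolate the same multi-homogeneous components, so this is a matter of packaging rather than substance. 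One small correction: your justification ``$m_i\ge j$ follows from the vanishing of $\varphi_j$ on bodies of lower dimension'' is not the right reason; the bound follows immediately from the total-degree constraint $m_1+\cdots+m_k=(k-1)n+j$ together with $m_i\le n$, which you already have.
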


\begin{proof}\smartqed Again, we give only a sketch of the proof and refer to \cite{W2015a, W2015b} for details. In particular, we omit the discussion of the necessary measurability properties. Also, we concentrate on the case $k=2$, the general case follows then by iteration. On the other hand, we give a more general proof using measures, since this will give us also a local version of the theorem, as explained in Remark 1 below.

The main idea is to consider polytopes first and then extend the result to arbitrary convex bodies by approximation. Since the restriction of $\varphi_j$ to $\cP^n$ is a continuous, translation invariant valuation, homogeneous of degree $j$, we can use \eqref{pol} to decompose $\varphi_j(P), P\in \cP^n,$ as
$$
\varphi_j (P) = \sum_{F\in\cF_j(P)} f_j(n (P,F)) V_j(F) .
$$
We define a measure $\Phi_j (P,\cdot)$ on $\R^n$ by
$$
\Phi_j (P,\cdot) := \sum_{F\in\cF_j(P)} f_j(n (P,F)) \lambda_F .
$$
Here, $\lambda_F$ denotes the restriction to $F$ of the $j$-dimensional Lebesgue measure in the affine space generated by $F$.
Then, $P\mapsto \Phi_j (P,\cdot)$ is a measure valuation, homogeneous of degree $j$ (in the sense of \eqref{meashom}) and translation covariant.

The following arguments are similar to those used in the proofs of Theorems 5.2.2 and 6.4.1 in \cite{SW}. Since $f_j$ can be written as a difference of two positive functions $f_j=f_j^+-f_j^-$, we may assume $f_j\ge 0$ (the additivity of $f_j$, which can get lost in this decomposition, does not play a role in the following arguments).

Let $P,Q\in\cP^n$, $A,B\in\cB(\R^n)$ and $x\in\R^n$. Then,
$$
\Phi_j (P\cap (Q+x),A\cap (B+x)) = \sum_{F'\in\cF_j(P\cap (Q+x))} f_j(n (P\cap (Q+x),F')) \lambda_{F'}(A\cap (B+x)) .
$$
For $\lambda_n$-almost all $x$, the face $F'$ is the intersection $F'=F\cap (G+x)$ of some $m$-face $F$ of $P$ with the translate of a $(n+j-m)$-face $G$ of $Q$, $m\in\{j,\dots, n\}$ (such that $F$ and $G+x$ meet in relative interior points). The normal cone of $F\cap (G+x)$ does not depend on the choice of $x$, let $n(P,Q;F,G)$ be its intersection with ${\mathbb S}^{n-1}$. Thus,
\begin{align*}
\int_{\R^n}&\Phi_j (P\cap (Q+x),A\cap (B+x))\, \lambda_n(dx)\\
&= \sum_{m=j}^n\sum_{F\in\cF_m(P)}\sum_{G\in\cF_{n+j-m}(Q)}f_j(n(P,Q;F,G))\int_{\R^n}\lambda_{F\cap(G+x)}(A\cap (B+x))\,\lambda_n(dx).
\end{align*}
In \cite[pp. 185-186]{SW} it is shown that
$$
\int_{\R^n}\lambda_{F\cap(G+x)}(A\cap (B+x))\,\lambda_n(dx) = [F,G]\lambda_F(A)\lambda_G(B) ,
$$
where $[F,G]$ denotes the determinant between $F$ and $G$ (see \cite[p. 183]{SW}). Hence, if we define a measure $\Phi_{m,n+j-m}(P,Q;\cdot)$ on $\R^n\times\R^n$ by
\begin{equation}\label{mixtpol}
\Phi_{m,n+j-m}(P,Q;\cdot) := \sum_{F\in\cF_m(P)}\sum_{G\in\cF_{n+j-m}(Q)}f_j(n(P,Q;F,G))[F,G]\lambda_F\otimes\lambda_G ,
\end{equation}
we arrive at
\begin{align}\label{transintpol}
\int_{\R^n}&\Phi_j (P\cap (Q+x),A\cap (B+x))\, \lambda_n(dx)= \sum_{m=j}^n \Phi_{m,n+j-m}(P,Q;A\times B).
\end{align}

Now we consider the total measures $\Phi_j (P\cap (Q+y),\R^n)$ (which equals our valuation $\varphi_j(P\cap(Q+y))$) and $\varphi_{m,n+j-m}(P,Q):=\Phi_{m,n+j-m}(P,Q;\R^n\times\R^n)$. Then \eqref{transintpol} implies
\begin{align}\label{transintpol3}
\int_{\R^n}\varphi_j (P\cap (Q+x))\, \lambda_n(dx)
&= \sum_{m=j}^n
\varphi_{m,n+j-m}(P,Q).
\end{align}
We remark that
\begin{align}\label{pol1}
\varphi_{m,n+j-m}(P,Q) &= \sum_{F\in\cF_m(P)}\sum_{G\in\cF_{n+j-m}(Q)}f_j(n(P,Q;F,G))[F,G]V_j(F)V_{n+j-m}(G)
\end{align}
and therefore
\begin{equation}\label{pol2}
\varphi_{m,n+j-m}(rP,sQ) = r^ms^{n+j-m}\varphi_{m,n+j-m}(P,Q)
\end{equation}
for $r,s>0$.

We define a functional $J$ on $\cK^n\times\cK^n$ by
$$
J(K,M) := \int_{\R^n}\varphi_j (K\cap (M+x))\, \lambda_n(dx).
$$
Let $K_i\to K,M_i\to M$ be convergent sequences.
Since $\varphi_j$ is continuous, there is a constant $c(K+B^n)$ such that
$$
|\varphi_j(K_i\cap (M_i+x))|\le c(K+B^n) {\bf 1}_{(K+B^n)-(M+B^n)}(x)
$$
for all large enough $i$.
For $\lambda_n$-almost all $x$ the integrand $\varphi_j(K_i\cap (M_i+x))$ converges to $\varphi_j(K\cap (M+x))$ (namely, for all $x$ for which $K$ and $M+x$ do not touch each other). Hence the dominated convergence theorem implies $J(K_i,M_i)\to J(K,M)$, thus $J$ is continuous.

Choosing polytopes $P_i\to K,Q_i\to M$, we obtain $J(rP_i,sQ_i)\to J(rK,sM)$ for all $r,s>0$. Since
$$
J(rP_i,sQ_i) = \sum_{m=j}^n r^ms^{n+j-m}\varphi_{m,n+j-m}(P_i,Q_i),
$$
the coefficients $\varphi_{m,n+j-m}(P_i,Q_i)$ of this polynomial have to converge, and we denote the limits by $\varphi_{m,n+j-m}(K,M), m=j,\dots ,n$. Hence,
$$
J(rK,sM) = \sum_{m=j}^n r^ms^{n+j-m}\varphi_{m,n+j-m}(K,M)
$$
and, putting $r=s=1$, we get \eqref{ittrans} (for $k=2$).

It remains to prove the properties of the mixed functionals $\varphi_{m,n+j-m}$. The symmetry and the homogeneity property follow for polytopes from \eqref{pol1} and \eqref{pol2}, and for arbitrary bodies by approximation. The valuation property follows from \eqref{ittrans}, if one takes into account the additivity properties of the integrand on the left side and compares these with the different homogeneity properties of the summands on the right side.
\qed
\end{proof}

{\bf Remarks.}
 {\bf 1.} Theorem \ref{trans4} also holds for measure valuations. More precisely, let $\Phi : {\cK}^n\to {\cal M}(\R^n)$ be a continuous, translation covariant valuation and let $\Phi_j$ be its $j$-homogeneous part, $j=0,\dots ,n$ (see \eqref{measval}). Then, for $k\ge 2$, there exist mixed measure-valued functionals $\Phi_{\m}$, $\m\in\mix(j,k)$, on ${({\cK^n})^{k}}$ such that
\begin{align}
  \int_{({\mathbb R}^n)^{k-1}}&
\Phi_j(K_1\cap (K_2+{x_2})\cap \dots \cap (K_k+{x_k}),A_1\cap (A_2+{x_2})\cap \dots \cap (A_k+{x_k}))\nonumber \\
&\times\, \lambda_n^{k-1} (d(x_2,\dots ,x_k))\nonumber \\
&\quad = \sum_{\m\in\mix(j,k)}
\Phi_{\m}(K_1,\dots,K_k;A_{1}\times\cdots\times A_k)\label{ittrans2}
\end{align}
for $K_1,\dots,K_k \in {\cK^n}$ and $A_1,\dots,A_k\in\cB (\R^n)$.

For $(m_1,\ldots,m_k)\in\mix(j,k)$ the mapping $(K_1,...,K_k)\mapsto \Phi_{m_1,\dots,m_k}(K_1,\dots,K_k;A_{1}\times\cdots\times A_k )$ is symmetric (w.r.t. permutations of the indices $1,...,k$), it is homogeneous of degree $m_i$ in $K_i$ and $A_i$, and it is a continuous, translation covariant measure valuation in each of its variables $K_i$.

The proof  follows the same lines as in the case of Theorem \ref{trans4} by starting with the case of polytopes. For $k=2$, we then arrive again at \eqref{transintpol}. In order to extend this expansion to arbitrary bodies $K,M\in\cK^n$, we use the fact that \eqref{transintpol} is equivalent to
\begin{align}\label{transintpol2}
\int_{\R^n}\int_{\R^n}&g(x,x-y)\Phi_j (P\cap (Q+y),dx)\, \lambda_n(dy)\nonumber\\
&= \sum_{m=j}^n
\int_{(\R^n)^2}g(x,y)\Phi_{m,n+j-m}(P,Q;d(x,y))
\end{align}
for each continuous function $g$ on $(\R^n)^2$. Again, the dominated convergence theorem shows that the integral on the left, for arbitrary bodies $K,M$, is a continuous functional $J(g,K,M)$. The homogeneity properties are then used again to show that, for $P_i\to K, Q_i\to M$, each of the integrals
$$
\int_{(\R^n)^2}g(x,y)\Phi_{m,n+j-m}(P_i,Q_i;d(x,y))
$$
on the right side converges. Therefore, the measures $\Phi_{m,n+j-m}(P_i,Q_i;\cdot)$ converge weakly and the limit measures $\Phi_{m,n+j-m}(K,M;\cdot)$ satisfy \eqref{ittrans2} (for $k=2$).

{\bf 2.} Of course, for a measure valuation $\Phi_j$ as above, the total measure $\varphi_j(K)= \Phi_j(K,\R^n)$ satisfies Theorem \ref{trans4} with mixed functionals which are given by the total measures
$$
\varphi_{\m}(K_1,\dots,K_k) = \Phi_{\m}(K_1,\dots,K_k;\R^n\times\cdots\times \R^n) ,\quad \m\in\mix(j,k).
$$
We then say that the measure valuation $\Phi_j$ is a {\it local extension} of the scalar valuation $\varphi_j$ (more generally $\Phi := \sum_{j=0}^n \Phi_j$ is a local extension of $\varphi := \sum_{j=0}^n \varphi_j$). Thus, if a valuation $\varphi\in \Val$ has such a local extension, then the iterated translative formula holds in a global as well as a local version. This fact is of importance, if expectation formulas for non-stationary Boolean models are considered. It is an open question, whether each valuation $\varphi\in \Val$ has a local extension. Local extensions, if they exist, are not unique (corresponding examples are given in \cite{W2015a, W2015b}). It is another open problem to describe all local extensions of a valuation $\varphi$.

{\bf 3.} Due to the summation condition $m_1+\cdots +m_k=(k-1)n+j$, only finitely many different mixed functionals arise in the iterated translative formula \eqref{ittrans}. Namely, the mixed functionals $\varphi_{m_1,\dots,m_k}$ (as well as the local versions $\Phi_{m_1,\dots,m_k}$) split, if one of the parameters $m_i$ equals $n$. In fact, if we consider \eqref{mixtpol} for $m=n$, then $F=P$ (we may assume that $P$ is full dimensional) and $G\in{\cF}_j(Q)$. Then $n(P,Q;F,G) = n(Q,G)$ and $[F,G]=1$, hence
\begin{align*}
\Phi_{n,j}(P,Q;\cdot) &= \lambda_P\otimes\left(\sum_{G\in\cF_{j}(Q)}f_j(n(Q,G))\lambda_G\right)\\
&=\lambda_P\otimes\Phi_j(Q,\cdot).
\end{align*}
This extends to arbitrary bodies $K,M$ (if $\varphi_j$ has a local extension $\Phi_j$) and to the total measures $\varphi_{n,j}(K,M)$. More generally, for $k\geq 2 $ and $(m_1,\ldots,m_{k-1},n)\in\mix(j,k)$ we have
\begin{equation*}
\varphi_{m_1,\dots,m_{k-1},n}(K_1,\dots,K_{k-1},K_k) = \varphi_{m_1,\dots,m_{k-1}}(K_1,\dots,K_{k-1})V_n(K_k)
\end{equation*}
and, if a local extension exists,
\begin{equation*}
\Phi_{m_1,\dots,m_{k-1},n}(K_1,\dots,K_{k-1},K_k;\cdot) = \Phi_{m_1,\dots,m_{k-1}}(K_1,\dots,K_{k-1};\cdot)\otimes \lambda_{K_k} .
\end{equation*}
Because of the symmetry, the case $m_k=n$ also implies corresponding decompositions if $m_i=n, i\in\{1,\dots ,k-1\}$.

\subsection{Kinematic formulas}

The Boolean model $Z$ is isotropic, if and only if the grain distribution $\bf Q$ is rotation invariant. If this is the case, the  translative integrals in \eqref{itint} can be replaced by an integration over the group $G_n$ of rigid motions (with invariant measure $\mu$), hence Theorem \ref{Th2} holds with
\begin{equation}\label{itint2}
\Phi(K_0,K_1,\dots,K_k) := \int_{(G_n)^k}\varphi (K_0\cap g_1K_1\cap\dots\cap g_kK_k) \, \mu^k(d(g_1,\dots ,g_k)) \, .
\end{equation}
Here, Hadwiger's general integral-geometric theorem (see \cite[Theorem 5.1.2]{SW}) shows that
\begin{equation}\label{HadIntTh}
\int_{G_n}\varphi (K\cap gM) \, \mu(dg)  = \sum_{k=0}^n \varphi_{n-k}(K) V_k(M),
\end{equation}
for $K,M\in\cK^n$, where the coefficients $\varphi_{n-k}(K)$ are given by the Crofton-type integrals
\begin{equation}\label{HadIntTh2}
\varphi_{n-k}(K)=\int_{A(n,k)}\varphi (K\cap E) \, \mu_k(dE)
\end{equation}
over the space $A(n,k)$ of affine $k$-dimensional flats in $\R^n$ with invariant measure $\mu_k$. \eqref{HadIntTh} follows from an application of Hadwiger's characterization theorem \cite[Theorem 14]{S2015} and holds for continuous valuations, even without the assumption of translation invariance. The $\varphi_{n-k}$ are then also continuous valuations. If $\varphi\in \Val_j$, then $\varphi_{n-k}\in \Val_{n+j-k}$. If $\varphi\in\Val$ has a local extension $\Phi\ge 0$, then a direct proof of  \eqref{HadIntTh} is possible, based on the translative integral formula \eqref{transintpol} for polytopes and the representation \eqref{mixtpol} (see \cite{W2015b}, for details).

Formula \eqref{HadIntTh} can be easily iterated and yields
\begin{align}\label{HadIntThit}
\int_{(G_n)^k}&\varphi (K_0\cap g_1K_1\cap\dots\cap g_kK_k) \, \mu^k(d(g_1,\dots ,g_k))\\
&= \sum_{\m\in\mix (0,k+1)} c^n_{n-m_0}\varphi_{m_0}(K_0)\prod_{i=1}^k c^{m_i}_nV_{m_i}(K_i) ,
\end{align}
with $\m =(m_0,\dots ,m_k)$ and constants defined by
$$
c^r_s := \frac{r!\kappa_r}{s!\kappa_s} ,
$$
see \cite[Theorem 5.1.4]{SW}.

For $\varphi = V_j$, the integral \eqref{HadIntTh2} can be solved by the Crofton formula and we get
$$
\varphi_{n-k}(K) = c^k_jc^{n+j-k}_n V_{n+j-k}(K)
$$
for $k\ge j$ (and $\varphi_{n-k}(K)=0$ otherwise),
which yields the principal kinematic formula
\begin{equation}\label{pkf}
\int_{G_n}V_j(K\cap gM) \, \mu(dg)  = \sum_{k=j}^n c^k_jc^{n+j-k}_n V_k(K) V_{n+j-k}(M)
\end{equation}
and the iterated version
\begin{align}
&\int_{(G_n)^k}V_j (K_0\cap g_1K_1\cap\dots\cap g_kK_k) \, \mu^k(d(g_1,\dots ,g_k))
\nonumber\\
& =\sum_{\m\in\mix (j,k+1)} c^n_{j}\prod_{i=0}^k c^{m_i}_nV_{m_i}(K_i) .\label{pkfit}
\end{align}

\section{Mean Values for Valuations}
\label{sec:5}

Combining Theorems \ref{Th2} and \ref{trans4}, we obtain the following expectation formula for a stationary Boolean model $Z$ and $\varphi\in \Val_j$,
\begin{align}
\E \varphi (Z\cap K_0) &= \sum_{k=1}^\infty \frac{(-1)^{k-1}}{k!}\gamma^k\sum_{\m\in\mix(j,k+1)}\label{expform}\\
&\int_{\cK^n_c}\cdots \int_{\cK^n_c}
\varphi_{\m}(K_0,K_1,\dots,K_k)\,\bQ(dK_1)\cdots \bQ(dK_k).\nonumber
\end{align}
Here, it is important to observe that the right hand side can be simplified due to the decomposition property which we mentioned above. The resulting formulas are presented in \cite{W2015b}. They depend on the shape and size of the window $K_0$. We can get simpler results, if we eliminate the effect of the window by a suitable limit procedure, namely by normalizing with $V_n(K_0)$ (here we assume $V_n(K_0)>0$) and then letting $K_0$ grow to $\R^n$ (for simplicity, we consider $rK_0, r\to\infty$). Then, on the right hand side all summands with multi-index $\m=(m_0,\ldots,m_k)$ and $m_0<n$ will vanish asymptotically.
If we define for $\m\in\mix(j,k)$ the {\it density} (mean value) $\overline\varphi_{\m}(Y,\dots,Y)$ of the mixed valuation $\varphi_{\m}$ for the (Poisson) particle process $Y$ by
$$
\overline\varphi_{\m}(Y,\dots,Y) = \gamma^k\int_{\cK^n_c}\cdots \int_{\cK^n_c}
\varphi_{\m}(K_1,\dots,K_k)\,\bQ(dK_1)\cdots \bQ(dK_k),
$$
the right hand side thus reads
$$
\sum_{k=1}^\infty \frac{(-1)^{k-1}}{k!}\sum_{\m\in\mix(j,k)}
\overline\varphi_{\m}(Y,\dots,Y).
$$

This also indicates, that the corresponding limit
\begin{equation}\label{limit}
\lim_{r\to\infty}\frac{1}{V_n(rK_0)}\E \varphi (Z\cap rK_0)
\end{equation}
on the left hand side exists. We will discuss this in the following subsection. The second subsection then contains the central result, the explicit expectation formula for valuations and Boolean models. The third subsection shortly discusses the isotropic case.

\subsection{Densities for valuations and random sets}

The following result is Theorem 9.2.1 in \cite{SW}, in a slightly less general form. It shows that the limit in \eqref{limit} exists for valuations $\varphi\in\Val$ (additively extended to the convex ring ${\cal R}^n$) and stationary random sets $Z$ with values in the extended convex ring
$$
{\cal S}^n = \{ F\subset \R^n : F\cap rB^n\in {\cal R}^n \ {\rm for\ all\ }r>0\},
$$
satisfying the condition
\begin{equation}\label{intcondZ}
\E 2^{N(Z\cap B^d)} <\infty.
\end{equation}
Here, $N(A)$, for a set $A\in{\cal R}^n$, is the minimal number $k$ of convex bodies $K_1,\dots ,K_k\in{\cal K}^n$ such that $A=\bigcup_{i=1}^k K_i$.

The class ${\cal S}^n$ consists of countable unions of convex bodies (locally polyconvex sets) and is supplied with the Borel $\sigma$-algebra generated by the Hausdorff metric (which is the same $\sigma$-algebra as the one generated by the {\it hit-or-miss} topology, see \cite[Section 2.4]{SW}). For a stationary Boolean models $Z$ with convex or polyconvex grains, we have $Z(\omega )\in{\cal S}^n$ and \eqref{intcondZ} is satisfied.

\begin{theorem}\label{rs-density}
Let $Z$ be a stationary random set with values in $\cS^n$, satisfying \eqref{intcondZ}. Let $\varphi\in\Val$ and $K\in\cK^n$ with $V_n(K)>0$. Then the limit
$$
\overline\varphi (Z) = \lim_{r\to\infty}\frac{1}{V_n(rK)}\E \varphi (Z\cap rK)
$$
exists and is independent of $K$.
\end{theorem}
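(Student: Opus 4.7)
The plan is to introduce $\psi(K):=\E\,\varphi(Z\cap K)$ for $K\in\cK^n$, to show that $\psi$ is a translation-invariant continuous valuation on $\cK^n$ (so $\psi\in\Val$), and then to invoke McMullen's decomposition \eqref{val} to isolate the $n$-homogeneous part, from which the limit and its independence from $K$ follow immediately.

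\textbf{Finiteness.} The valuation $\varphi$ extends additively to $\cR^n$ by inclusion-exclusion, and satisfies $|\varphi(A)|\le 2^{N(A)}c(K)$ whenever $A\in\cR^n$ with $A\subseteq K$, where $c(K):=\sup\{|\varphi(L)|:L\in\cK^n,\,L\subseteq K\}$ is finite by continuity of $\varphi$ on the Hausdorff-compact family of subbodies of $K$. Covering $K$ by finitely many translates of $B^n$ and invoking stationarity reduces $\E\,2^{N(Z\cap K)}<\infty$ to hypothesis \eqref{intcondZ}, so $\psi(K)$ is well-defined and finite.

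\textbf{Valuation and translation invariance.} For $K,M\in\cK^n$ with $K\cup M\in\cK^n$, the identities $Z\cap(K\cup M)=(Z\cap K)\cup(Z\cap M)$ and $Z\cap(K\cap M)=(Z\cap K)\cap(Z\cap M)$, combined with additivity of the extension of $\varphi$ to $\cR^n$, give $\psi(K\cup M)+\psi(K\cap M)=\psi(K)+\psi(M)$ after taking expectations. From $Z\cap(K+x)=((Z-x)\cap K)+x$, translation invariance of $\varphi$, and stationarity of $Z$, one concludes $\psi(K+x)=\psi(K)$.

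\textbf{Continuity and decomposition.} Let $K_i\to K$ in Hausdorff metric. Eventually $K_i\subseteq K':=K+B^n$; writing $Z\cap K'=\bigcup_{j=1}^m L_j$ with $L_j$ convex (almost surely), we have $Z\cap K_i=\bigcup_j(L_j\cap K_i)$, whence $N(Z\cap K_i)\le N(Z\cap K')$ and the integrable dominating bound $|\varphi(Z\cap K_i)|\le 2^{N(Z\cap K')}c(K')$. Pointwise convergence $\varphi(Z(\omega)\cap K_i)\to\varphi(Z(\omega)\cap K)$ holds for $\P$-a.e.\ $\omega$ by a generic-position argument exploiting stationarity. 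Dominated convergence then shows $\psi\in\Val$, and McMullen's decomposition yields $\psi=\sum_{j=0}^n\psi_j$ with $\psi_j\in\Val$ homogeneous of degree $j$ and $\psi_n=c\,V_n$ for a constant $c$.

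\textbf{Limit.} For $K\in\cK^n$ with $V_n(K)>0$,
\begin{align*}
\frac{\psi(rK)}{V_n(rK)} = c + \sum_{j=0}^{n-1} r^{j-n}\,\frac{\psi_j(K)}{V_n(K)} \;\longrightarrow\; c \qquad (r\to\infty),
\end{align*}
so the limit exists and equals $c=:\overline\varphi(Z)$, independent of $K$.

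The main obstacle is the almost-sure pointwise convergence in the continuity step. The additive extension of a continuous valuation to $\cR^n$ is not Hausdorff-continuous in general (the Euler characteristic $V_0$ being the classical counterexample), so one must genuinely exploit the stationarity of $Z$ to show that the $\P$-null event on which $\partial K$ meets the convex components of $Z$ in a degenerate way does not spoil the limit—typically via a Fubini-type argument over translations of $K$ to reduce a Hausdorff-convergent approximation to a translational one, along which values of $\varphi$ are stable outside a $\lambda_n$-null set.
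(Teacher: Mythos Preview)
Your proposal is correct, and the overall architecture---define $\psi(K)=\E\varphi(Z\cap K)$, verify $\psi\in\Val$, then extract the limit---is the same as the paper's. The divergence is in the final step. After obtaining $\psi\in\Val$, the paper invokes an additive extension of $\psi$ to ro-polyhedra (Theorem 5 in \cite{S2015}) and then uses the lattice decomposition of $\R^n$ into half-open unit cubes $C^d_0,C^d_1,\dots$ to show directly that $\psi(rK)/V_n(rK)\to\psi(C^d_0)$. You instead apply McMullen's decomposition together with Hadwiger's result $\psi_n=cV_n$ and read off the limit as the coefficient $c$. Your route is arguably the cleanest way to get existence and $K$-independence; the paper's route buys the explicit identification $\overline\varphi(Z)=\E\varphi(Z\cap C^d_0)$, which it immediately exploits to produce the unbiased estimator $\varphi(Z\cap C^d)-\varphi(Z\cap\partial^+C^d)$.

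On the continuity step: you have correctly located the only genuine difficulty, and your Fubini-via-stationarity idea is the right one. To make it precise, note that $\psi(K)=\int_{[0,1]^n}\E\varphi(Z\cap(K+t))\,dt$ by stationarity; for fixed $\omega$ and fixed $K$, the set of $t$ for which some finite intersection of the convex pieces of $Z(\omega)$ merely touches $K+t$ lies in a finite union of boundaries of Minkowski sums and hence has Lebesgue measure zero, so for $\lambda_n$-a.e.\ $t$ the inclusion-exclusion terms converge under $K_i\to K$. Dominated convergence (with your bound $2^{N(Z\cap K')}c(K')$) and Fubini then give $\psi(K_i)\to\psi(K)$. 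Your one-line description ``reduce a Hausdorff-convergent approximation to a translational one'' is slightly misleading---the point is not to replace $K_i\to K$ by translates, but to average over translates so that the bad boundary configurations become $\lambda_n$-negligible.
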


For the proof, a functional $\phi \in\Val$ is defined by
$$
\phi (K) = \E \varphi (Z\cap K) ,\quad K\in{\cal K}^n.
$$
Then, Theorem 5 in \cite{S2015} is used to obtain an additive extension of $\phi$ to ro-polyhedra. Since $\R^n$ allows a lattice decomposition into half-open unit cubes $C^d_0,C^d_1,\dots$, one can show directly that
$$
\lim_{r\to\infty}\frac {\phi(rW)}{V_n(rW)} = \phi (C^d_0) .
$$

This argument shows slightly more, namely that
$$
\overline\varphi (Z) =  \E \varphi (Z\cap C^d_0).
$$

We call $\overline\varphi (Z)$ the  $\varphi${\it -density} (or {\it specific $\varphi$-value}) of $Z$. We can estimate the  $\varphi$-density by the $\varphi$-value of $Z$ on the unit cube $C^d$ minus the value $\varphi (Z\cap \partial^+C^d)$ on the upper right boundary $\partial^+C^d$ of $C^d$ (observe that $\partial^+C^d\in{\cal R}^n$).

We get, in particular, the existence of the densities $\overline\varphi_{m,n+j-m}(Z,K)$ for the mixed functionals $\varphi_{m,n+j-m}$ and $K\in{\cal K}^n$.

The following result is a nice application of our translative formula \eqref{ittrans} and generalizes Theorem 9.4.1 in \cite{SW}.
\begin{theorem}\label{rs-density2}
Let $Z$ be a stationary random set with values in $\cS^n$, satisfying \eqref{intcondZ}. Let $\varphi_j\in\Val_j$ and $K\in\cK^n$. Then
$$
\E \varphi_j (Z\cap K) = \sum_{m=j}^n \overline\varphi_{m,n+j-m} (Z,K) .
$$
\end{theorem}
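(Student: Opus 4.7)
The plan is to apply the translative formula of Theorem \ref{trans4} (in the case $k=2$) to $Z\cap rW$ and $K$, take expectations, and normalize by $V_n(rW)$ before letting the window $rW$ grow to $\R^n$. Stationarity will force the left-hand side to collapse to $\E\varphi_j(Z\cap K)$, while the definition of the mixed densities produces the sum on the right.

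First, I would extend \eqref{ittrans} (for $k=2$) to polyconvex sets in the first argument: for $M=M_1\cup\dots\cup M_N$ with $M_i\in\cK^n$, inclusion--exclusion together with the translative formula applied to each intersection $\bigcap_{i\in I}M_i$ yields the same identity, since $\varphi_j$ is additive and, by Theorem \ref{trans4}, each mixed functional $\varphi_{m,n+j-m}(\cdot,K)$ is itself a valuation in $\Val$ in its first variable. Second, I would substitute $M=Z(\omega)\cap rW$ for some $W\in\cK^n$ with $V_n(W)>0$, take expectations, and apply Fubini to obtain
\begin{align*}
\int_{\R^n}\E\varphi_j(Z\cap rW\cap(K+x))\,\lambda_n(dx)=\sum_{m=j}^n\E\varphi_{m,n+j-m}(Z\cap rW,K).
\end{align*}
Third, I would split the left-hand integration domain along $rW\ominus K:=\{x\in\R^n:K+x\subseteq rW\}$. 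For $x\in rW\ominus K$ one has $Z\cap rW\cap(K+x)=Z\cap(K+x)$, and stationarity of $Z$ gives $\E\varphi_j(Z\cap(K+x))=\E\varphi_j(Z\cap K)$, so the contribution from this set equals $\lambda_n(rW\ominus K)\,\E\varphi_j(Z\cap K)$. The complementary ``boundary'' region $(rW+(-K))\setminus(rW\ominus K)$ has Lebesgue measure $O(r^{n-1})$, on which the integrand is uniformly controlled. Fourth, dividing by $V_n(rW)$ and letting $r\to\infty$, the ratio $\lambda_n(rW\ominus K)/V_n(rW)\to 1$ so the left side tends to $\E\varphi_j(Z\cap K)$, while the right side tends to $\sum_{m=j}^n\overline{\varphi}_{m,n+j-m}(Z,K)$ by Theorem \ref{rs-density} applied to the valuations $\varphi_{m,n+j-m}(\cdot,K)\in\Val$ (which are translation invariant, since the defining translative integral $\int\varphi_j(K_1\cap(K_2+x))\,\lambda_n(dx)$ is invariant under translations of either $K_1$ or $K_2$).

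The main obstacle will be the integrability bookkeeping. Justifying Fubini and bounding the boundary contribution both require $|\varphi_j(Z\cap D)|\le c(K)\,2^{N(Z\cap D)}$ to have uniformly bounded expectation for sets $D\subseteq K+x$, which follows from the continuity of $\varphi_j$ on $\cK^n$, its additive extension to the convex ring, stationarity, and the fact that $\E 2^{N(Z\cap K')}<\infty$ for every $K'\in\cK^n$ -- the standard consequence of \eqref{intcondZ} obtained by covering $K'$ with unit balls. A secondary point is to check that $\varphi_{m,n+j-m}(\cdot,K)$ is continuous on $\cK^n$ so that Theorem \ref{rs-density} applies and the mixed densities on the right are legitimately defined.
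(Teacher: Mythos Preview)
Your proof is correct and follows the same overall strategy as the paper: apply the translative formula (extended to polyconvex first arguments), let a window grow, and invoke Theorem \ref{rs-density} on the right-hand side. The one place where the paper proceeds differently is in handling the left side. Rather than splitting the integral over $rW\ominus K$ and its complement and then bounding the boundary contribution via $2^{N(\cdot)}$, the paper first uses stationarity and translation invariance to write
\[
\E\int_{\R^n}\varphi_j(Z\cap K\cap(rB^n+x))\,\lambda_n(dx)=\E\int_{\R^n}\varphi_j(Z\cap(K+x)\cap rB^n)\,\lambda_n(dx),
\]
and then applies the translative formula to \emph{both} sides. The right side becomes your sum $\sum_m \E\varphi_{m,n+j-m}(Z\cap rB^n,K)$, while the left becomes $\sum_m \E\varphi_{m,n+j-m}(Z\cap K,rB^n)$. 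After dividing by $V_n(rB^n)$, homogeneity in the second argument kills every summand on the left except $m=j$, which equals $\E\varphi_j(Z\cap K)$ by the splitting $\varphi_{j,n}(\cdot,rB^n)=\varphi_j(\cdot)V_n(rB^n)$. This sidesteps the boundary estimate and the associated integrability bookkeeping you identified as the main obstacle; your approach is perfectly valid but carries that extra work.
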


\begin{proof} \smartqed
We can follow the proof of Theorem 9.4.1 in \cite{SW}. We use the stationarity of $Z$ and the translation invariance of $\varphi_j$ to get
\begin{align*}
\E \int_{\R^n} \varphi_j(Z\cap K\cap (rB^n+x))\lambda_n(dx)
&= \E \int_{\R^n} \varphi_j(Z\cap (K+x)\cap rB^n)\lambda_n(dx).
\end{align*}
Now we apply the translative formula to both sides and obtain
$$
\sum_{m=j}^n \E  \varphi_{m,n-m+j}(Z\cap K, rB^n) = \sum_{m=j}^n \E  \varphi_{m,n-m+j}(Z\cap rB^n, K).
$$
We divide both sides by $V_n(rB^n)$ and let $r\to\infty$. On the left, the homogeneity properties of the mixed functionals show that only the summand for $m=j$ remains (and yields  $\E  \varphi_j(Z\cap K)$). Each summand on the right side converges to the corresponding density.
\qed
\end{proof}

The summand on the right hand side for $m=j$ is $\overline\varphi_j (Z) V_n(K)$. Theorem \ref{rs-density2} thus gives the error (or bias), if the mean value $\overline\varphi_j (Z)$ is estimated by the values of $\varphi_j$ for realisations $Z(\omega )$ in a window $K$. As mentioned above, an unbiased estimator of $\overline\varphi_j (Z)$ is given by $\varphi_j (Z(\omega) \cap C^d) - \varphi_j (Z(\omega) \cap \partial^+C^d)$.

\subsection{The mean value formula for Boolean models}

We now come back to Boolean models $Z$ and continue with the formula
$$
\overline\varphi_j (Z)=\sum_{k=1}^\infty \frac{(-1)^{k-1}}{k!}\sum_{\m\in\mix(j,k)}
\overline\varphi_{\m}(Y,\dots,Y),
$$
for $\varphi_j\in\Val_j$, $j\in\{0,\dots ,n\},$ which we have developed. To simplify this formula further, we use again the decomposition property.  For $j=n$, we have only one summand
$$
\overline\varphi_{n,\dots,n}(Y,\dots,Y)= \overline\varphi_n (Y)\overline V_n(Y)^{n-1} = c_n\overline V_n(Y)^{n},$$
which gives us
\begin{align*}
\overline\varphi_n(Z) &=c_n\overline V_n(Z)\\
&=c_n\sum_{k=1}^\infty \frac{(-1)^{k-1}}{k!}\overline V_n(Y)^k\\
&= c_n\left(1-\e^{-\overline V_n(Y)} \right) .
\end{align*}
For $j<n$ and $\m =(m_1,\ldots,m_k)\in\mix(j,k)$, let $s$ be the number of indices which are smaller than $n$. By symmetry we can assume $m_{s+1}=\ldots=m_k=n$. Then
\[
\overline{\varphi}_{m_1,\ldots,m_s,n,\ldots,n}(Y,\ldots,Y)=\overline{\varphi}_{m_1,\ldots,m_s}(Y,\ldots,Y)\overline{V}_n(Y)^{k-s},
\]
where $s\in\{1,\dots ,n-j\}$ and $m_i\in\{ j,\dots , n-1\}$, for $i=1,\dots ,s$.
Introducing the notation
\[
\mix(j):=\{(m_1,\ldots,m_s)\in \{j,\ldots,n-1\}^s \cap \mix(j,s): 1\leq s\leq n-j\},
\]
we obtain
\begin{align*}
&\sum_{\m\in\mix(j,k)}
\overline\varphi_{m}(Y,\dots,Y)= \sum_{s=1}^{(n-j)\wedge k}{k\choose s} \sum_{{\m\in\mix(j)}\atop{|\m|=s}}
\overline\varphi_{m}(Y,\dots,Y)\overline V_n(Y)^{k-s},
\end{align*}
where $(n-j)\wedge k$ denotes the minimum of $n-j$ and $k$. This implies
\begin{align*}
\overline\varphi_j(Z) &= \sum_{k=1}^\infty (-1)^{k-1} \sum_{s=1}^{(n-j)\wedge k}\frac{1}{(k-s)!}\overline V_n(Y)^{k-s}
\sum_{{\m\in\mix(j)}\atop{|\m|=s}}\frac{1}{s!}
\overline\varphi_{\m}(Y,\dots,Y)\\
&= \sum_{s=1}^{n-j}\sum_{r=0}^\infty\frac{(-1)^{r+s-1}}{r!}\overline V_n(Y)^{r} \sum_{{\m\in\mix(j)}\atop{|\m|=s}}\frac{1}{s!}
\overline\varphi_{\m}(Y,\dots,Y)\\
&=\e^{-\overline V_n(Y)} \sum_{\m\in\mix(j)}\frac{(-1)^{|\m|-1}}{|\m|!}
\overline\varphi_{\m}(Y,\dots,Y).
\end{align*}
For $j=n-1$, the sum in the formula above reduces to $\overline\varphi_{n-1}(Y)$. Hence we have obtained the following result.

\begin{theorem}\label{bm}
Let $Z$ be a stationary Boolean model with convex grains and let $\varphi_j\in\Val_j$. Then,
$$
\overline\varphi_n(Z) = c_n\left(1-\e^{-\overline V_n(Y)} \right),
$$
$$
\overline\varphi_{n-1}(Z) = \e^{-\overline V_n(Y)}\overline\varphi_{n-1}(Y) ,
$$
and
\begin{align}\label{BM-j}
\overline\varphi_j(Z)
&=\e^{-\overline V_n(Y)} \sum_{\m\in\mix(j)}\frac{(-1)^{|\m|-1}}{|\m|!}
\overline\varphi_{\m}(Y,\dots,Y),
\end{align}
for $j=0,\dots ,n-2$.
\end{theorem}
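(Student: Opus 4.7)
The plan is to start from the expansion
\[
\overline\varphi_j(Z)=\sum_{k=1}^\infty \frac{(-1)^{k-1}}{k!}\sum_{\m\in\mix(j,k)}\overline\varphi_{\m}(Y,\dots,Y),
\]
which is obtained by combining Theorem \ref{Th2} with the iterated translative formula of Theorem \ref{trans4}, dividing by $V_n(rK_0)$ and letting $r\to\infty$ (existence of the limit on the left being guaranteed by Theorem \ref{rs-density}). The task is then purely algebraic: regroup the multi-indices and apply the decomposition property from Remark~3 of Section~4.1.

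First I would dispose of the case $j=n$. Here $\mix(n,k)=\{(n,\dots,n)\}$ and the decomposition property gives $\overline\varphi_{n,\dots,n}(Y,\dots,Y)=c_n\,\overline V_n(Y)^{k}$, so the series collapses by the exponential identity to $c_n(1-e^{-\overline V_n(Y)})$. For $j<n$, I would split each multi-index $\m\in\mix(j,k)$ according to the number $s$ of entries strictly smaller than $n$; by symmetry of $\overline\varphi_{\m}$ the entries equal to $n$ may be placed in the last $k-s$ positions, and Remark~3 yields the factorisation
\[
\overline\varphi_{m_1,\dots,m_s,n,\dots,n}(Y,\dots,Y)=\overline\varphi_{m_1,\dots,m_s}(Y,\dots,Y)\,\overline V_n(Y)^{k-s}.
\]
The number of positions into which the $s$ small indices can be inserted produces the binomial factor $\binom{k}{s}$.

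Next I would perform the combinatorial rearrangement. Substituting the factorisation and using $\binom{k}{s}/k!=1/(s!(k-s)!)$ converts the inner sum over $\m\in\mix(j,k)$ into
\[
\sum_{s=1}^{(n-j)\wedge k}\binom{k}{s}\sum_{{\m\in\mix(j)}\atop{|\m|=s}}\overline\varphi_{\m}(Y,\dots,Y)\,\overline V_n(Y)^{k-s}.
\]
Exchanging the order of summation and setting $r=k-s$ separates the series into an outer sum over $s\in\{1,\dots,n-j\}$ and an inner sum over $r\ge 0$ of $(-1)^{r+s-1}\overline V_n(Y)^r/r!$, the latter being exactly $(-1)^{s-1}e^{-\overline V_n(Y)}$. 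Factoring this exponential out produces formula \eqref{BM-j}. The formula for $j=n-1$ follows because $\mix(n-1)$, by the constraints $m_1+\dots+m_s=(s-1)n+(n-1)$ and $m_i\in\{n-1,\dots,n-1\}$, contains only the singleton $(n-1)$.

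The only subtle step is verifying the factorisation of $\overline\varphi_{m_1,\dots,m_s,n,\dots,n}$ at the level of densities, but this is immediate from Remark~3 of Section~4.1 (where the corresponding identity is stated for the mixed functionals themselves) together with the definition of the density as the $k$-fold grain integral against $\gamma^k\bQ^{\otimes k}$, which factors into $\overline\varphi_{m_1,\dots,m_s}(Y,\dots,Y)$ and $\overline V_n(Y)^{k-s}$. Once this is in hand, the rest is the index bookkeeping described above.
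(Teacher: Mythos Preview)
Your proposal is correct and follows essentially the same approach as the paper: starting from the expansion obtained via Theorems~\ref{Th2} and~\ref{trans4}, splitting each $\m\in\mix(j,k)$ by the number $s$ of entries below $n$, applying the decomposition property (Remark~3 of Section~4.1) together with symmetry to peel off a factor $\overline V_n(Y)^{k-s}$ and pick up the binomial $\binom{k}{s}$, then swapping the sums over $k$ and $s$ and summing the resulting exponential series in $r=k-s$. Your explicit remark that the factorisation carries over to the densities because the $k$-fold $\bQ$-integral is a product is a point the paper leaves implicit, but otherwise the arguments coincide step for step.
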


In \eqref{BM-j}, the multi-index $\m\in\mix(j)$ with $|\m |=1$ is $\m =(j)$, it yields the summand $\overline\varphi_{j}(Y)$. The remaining summands have multi-indices $\m =(m_1,...,m_s)$ with $s>1$ and $m_i\in \{j+1,\dots ,n-1\}$, due to the definition of $\mix(j)$ and the summation rule in $\mix(j,s)$.

\subsection{The isotropic case} \label{Sec:IsotropicCase}

If the Boolean model $Z$ is stationary and isotropic, we can obtain a result analogous to Theorem \ref{bm} by using the iterated kinematic formula \eqref{HadIntThit}. Equivalently, we can use the rotation invariance of $\bf Q$ to show that the mean value $\overline\varphi_{\m}(Y,\dots,Y)$ in \eqref{BM-j} satisfies
$$
\overline\varphi_{\m}(Y,\dots,Y) = c_{\m}(\varphi_j) \prod_{i=1}^s \overline V_{m_i}(Y),
$$
for $\m = (m_1,\dots ,m_s)$ with constants $c_{\m}(\varphi_j)$ depending on $\varphi_j$.

In the case $\varphi_j = V_j$, we have
$$
c_{\m}(V_j) = c^n_j\prod_{i=1}^s c^{m_i}_n .
$$
Using this in Theorem \ref{bm}, we see that, in the isotropic case, all densities $\overline{V}_j(Z)$ can be expressed by the densities $\overline{V}_j(Y)$ and we obtain the famous Miles formulas
$$
\overline V_n(Z) = c_n\left(1-\e^{-\overline V_n(Y)} \right),
$$
$$
\overline V_{n-1}(Z) = \e^{-\overline V_n(Y)}\overline\varphi_{n-1}(Y) ,
$$
and
\begin{align*}
\overline V_j(Z)
&=\e^{-\overline V_n(Y)} \sum_{\m\in\mix(j)}\frac{(-1)^{|\m|-1}}{|\m|!}
c_j^n \prod_{i=1}^s c_n^{m_i}\overline V_{m_i}(Y),
\end{align*}
for $j=0,\dots ,n-2$.

These density formulas can be inverted successively from top to bottom. Since, for convex grains, the mean value $\overline V_0(Y)$ equals the intensity $\gamma$, we obtain in this way an expression for $\gamma$ in terms of the densities $\overline V_j(Z)$ of the Boolean model $Z$.

\section{Special Cases}

We now discuss which formulas arise from Theorem \ref{trans4} and Theorem \ref{bm}, if special valuations $\varphi$ are considered.

\subsection{Mixed volumes}
As a first case, we consider the {\it mixed volume} $\varphi (K) = V(K[j],M_{j+1},\dots,M_n)$, for fixed bodies $M_{j+1},\dots,M_n\in\cK$. It follows from the properties of the intrinsic volume $V_j$ (which corresponds to the case $M_{j+1}= \dots =M_n=B^n$) that $\varphi$ is in $\Val_j$. Formula \eqref{ittrans} thus gives
\begin{align}
&  \int_{({\mathbb R}^n)^{k-1}}
V(K_1\cap (K_2+x_2)\cap \dots \cap (K_k+x_k)[j], M_{j+1},\dots,M_n)\, \lambda^{k-1} (d(x_2,\dots ,x_k))\nonumber \\
& = \sum_{\m\in\mix(j,k)}
V_{\m}(K_1,\dots,K_k;M_{j+1},\dots,M_n),\label{mixedvol}
\end{align}
with mixed functionals $V_{\m}(K_1,\dots,K_k;M_{j+1},\dots,M_n)$.
The special case $M_{j+1}=\dots =M_n=B^n$, yields the iterated translative formula for intrinsic volumes,
\begin{align}
&  \int_{({\mathbb R}^n)^{k-1}}
V_j(K_1\cap (K_2+x_2)\cap \dots \cap (K_k+x_k))\, \lambda^{k-1} (d(x_2,\dots ,x_k))\nonumber \\
& = \sum_{\m\in\mix(j,k)}
V_{\m}(K_1,\dots,K_k).\label{mixedvol2}
\end{align}
(Note, however, that $V_j(K)$ and $V(K[j],B^n,\dots,B^n)$ differ by a constant. Therefore, $V_{\m}(K_1,\dots,K_k)$ and $ V_{\m}(K_1,\dots,K_k; B^n,\dots ,B^n)$ also differ by the same constant.)

We also remark that $K\mapsto V(K[j],M_{j+1},\dots,M_n)$ has a local extension given, up to a constant, by the mixed curvature measure $C(K[j],M_{j+1},\dots,M_n;\cdot)$ introduced and studied in \cite{KW} (see also \cite{Hug} and \cite{HL}). This implies a corresponding local integral formula coming from \eqref{ittrans2} which we do not copy here. Instead, we emphasize the special case $M_{j+1}=\dots =M_n=B^n$, where we have a multiple of the $j$th order curvature measure $C_j(K,\cdot), j=0,\dots ,n$, and where we obtain the iterated translative formula

\begin{align}
&  \int_{({\mathbb R}^n)^{k-1}}
C_j(K_1\cap (K_2+x_2)\cap \dots \cap (K_k+x_k),A_1\cap (A_2+x_2)\cap \dots\nonumber\\
&\quad\quad\ldots \cap (A_k+x_k))\, \lambda^{k-1} (d(x_2,\dots ,x_k))\nonumber \\
& = \sum_{\m\in\mix(j,k)}
C_{\m}(K_1,\dots,K_k;A_1\times\cdots\times A_k),\label{mixedmeas}
\end{align}
with mixed measures, which are different from the mixed curvature measures mentioned above. For example, the mixed curvature measure $C(M_1,\dots, M_n;\cdot)$ is a measure on $\R^n$, depending on $n$ bodies and with total degree of homogeneity $n$. In contrast to this, the mixed measure $C_{m_1,\dots ,m_k}(K_1,\dots,K_k;\cdot)$ is a measure on $(\R^n)^k$, depends on $k$ bodies and has total degree of homogeneity $m_1+\dots + m_k = (k-1)n+j$. Of course, these homogeneity properties already arise on the global level and distinguish mixed volumes and mixed functionals from translative integral geometry. There is a special case where the two series of functionals meet, for $j=0$ (where we have the translative formula for the Euler characteristic). Here,
$$
V_{m,n-m}(K,M) = {n\choose m}V(K[m],-M[n-m]),
$$
for $m=0,\dots ,n$.

Theorem \ref{bm} implies mean value formulas for mixed volumes and Boolean models. We only state the result for the intrinsic volumes, which is Theorem 9.1.5 in \cite{SW} and reads
$$
\overline V_n(Z) =1-\e^{-\overline V_n(Y)},
$$
$$
\overline V_{n-1}(Z) = \e^{-\overline V_n(Y)}\overline V_{n-1}(Y) ,
$$
and
\begin{align}
\overline V_j(Z)
&=\e^{-\overline V_n(Y)} \sum_{\m\in\mix(j)} \frac{(-1)^{|\m|-1}}{|\m|!}
\overline V_{\m}(Y,\dots,Y),\label{DensityFormIntVol}
\end{align}
for $j=0,\dots ,n-2$.

If $Z$ is isotropic, then the density of the mixed functional $V_{m_1,\ldots,m_s}$ splits,
\begin{align}
 \overline V_{m_1,\dots,m_s}(Y,\dots,Y) = c^n_j\prod_{i=1}^s c^{m_i}_n\overline V_{m_i}(Y),\label{SplitMixDensIsotropic}
\end{align}
with $$ c^k_i:=\frac{k!\kappa_k}{i!\kappa_i}, \quad i,k\in\N_0.
$$
This is Theorem 9.1.4 in \cite{SW} (with corrected constants).

\subsection{Support functions}

As a next case, we consider the {\it (centered) support function} $\varphi (K) = h^\ast (K,\cdot )$. This is a translation invariant, continuous and additive functional, which is homogeneous of degree 1, with values in the Banach space of centered continuous functions on $\s^{n-1}$. To fit this case into our framework, we may apply the results for scalar valuations point-wise, that is, for $h^\ast (K,u), u\in \s^{n-1}$. The iterated translative formula then reads
\begin{align}
&  \int_{({\mathbb R}^n)^{k-1}}
h^\ast (K_1\cap (K_2+x_2)\cap \dots \cap (K_k+x_k),\cdot)\, \lambda^{k-1}_n (d(x_2,\dots ,x_k))\nonumber \\
& = \sum_{\m\in\mix(1,k)}
h^\ast_{\m}(K_1,\dots,K_k,\cdot),\label{suppf}
\end{align}
with mixed support functions $h^\ast_{m_1,\dots,m_k}(K_1,\dots,K_k,\cdot)$, $(m_1,\ldots,m_k)\in\mix(1,k)$. This integral formula was studied in \cite{W95} and \cite{GW03}. In the latter paper, it was also shown that, for $k=2$, the mixed function $h^\ast_{m,n+1-m}(K_1,K_2,\cdot)$ is indeed a support function. For general $k$ this was shown, with a different proof, by Schneider (\cite{Sch03}).

The formula for Boolean models $Z$ reads
\begin{align*}
\overline h^\ast(Z,\cdot)
&=\e^{-\overline V_n(Y)} \sum_{\m\in\mix(1)} \frac{(-1)^{|\m|-1}
}{|\m|!}
\overline h^\ast_{\m}(Y,\dots,Y,\cdot).
\end{align*}

Again, there is a local extension of $K\mapsto h^\ast(K,u)$ given by the mixed measure $\phi_{1,n-1}(K,u^+;\cdot)$ where $u^+$ is the closed half-space with outer normal $u$ (see \cite{W95, GW03}). The corresponding iterated translative formula for this mixed measure is a consequence of Theorem \ref{trans4}, but it also follows from the general results in \cite[Section 6.4]{SW}.

\subsection{Area measures}
\label{sec:6.3}
Next, we consider the {\it area measure} map $S_j : K\mapsto S_j(K,\cdot)$. It is a translation invariant additive and measure-valued functional which is continuous with respect to the weak topology of measures.  To fit these measure-valued notions into our results, we cannot consider them point-wise, for a given Borel set, since this would not yield a continuous valuation. However, we can apply our results to the integral
$$\varphi^f_j (K) = \int_{\s^{n-1}} f(u) S_j(K,du)$$
with a continuous  function $f$ on $\s^{n-1}$. Namely, $\varphi_j^f$ is an element of $\Val$ and fulfills, by Theorem \ref{trans4}, the iterated translative formula
\begin{align*}
& \int\limits_{\left(\mathbb{R}^n\right)^{k-1}} \varphi^f_j(K_1\cap(K_2+x_2)\cap\ldots\cap(K_k+x_k)) \lambda_n^{k-1}(d(x_2,\ldots,x_k))\\
& \quad = \sum\limits_{\m\in\mix(j,k)}\varphi^f_{\m}(K_1,\ldots,K_k)
\end{align*}
with unique mixed functionals $\varphi^f_{m_1,\ldots,m_k}$, $(m_1,\ldots,m_k)\in\mix(j,k)$.
Let $C(\s^{n-1})$ denote the space of continuous functions on the sphere.
For $\m\in\mix(j,k)$ the mapping $f\mapsto \varphi^f_m(K)$ is a continuous linear functional on $C(\s^{n-1})$.
The Riesz representation theorem therefore implies the existence of a unique finite (signed) measure $S_{\m}(K_1,\ldots,K_k;\cdot)$ on $\s^{n-1}$ with
\[
\varphi^f_{\m}(K_1,\ldots,K_k)=\int\limits_{\s^{n-1}}f(u)S_{\m}(K_1,\ldots,K_k;du),
\]
which we call the mixed measure of area type.
Therefore, we obtain a translative formula for area measures reading
\begin{align*}
& \int\limits_{\left(\mathbb{R}^n\right)^{k-1}} S_j(K_1\cap(K_2+x_2)\cap\ldots\cap(K_k+x_k),\cdot) \lambda_n^{k-1}(d(x_2,\ldots,x_k))\\
& \quad = \sum\limits_{\m\in\mix(j,k)}S_{\m}(K_1,\ldots,K_k;\dot).
\end{align*}
Since area measures have centroid $0$, the same is true for the mixed measures. We emphasize again the difference between the mixed measures of area type, which arise in the translative formula, and the mixed area measures $S(K_1,\dots, K_{n-1},\cdot)$ which are defined as a coefficient in the multilinear expansion of $S_{n-1}(\alpha_1K_1+\dots +\alpha_{n-1}K_{n-1},\cdot), \alpha_i\ge 0$. Both types of measures are measures on the unit sphere but they depend on different numbers of bodies and have different homogeneity properties.

The translative formula for area measures was originally obtained in \cite{Hug}.

We remark that the mixed area-type measures for $j=0$ are trivial. Since $S_0(K,\cdot)=V_0(K) \sigma(\cdot)$ where $\sigma$ is the spherical Lebesgue measure we have by the uniqueness of the mixed measures
\begin{align*}
S_{\m}(K_1,\ldots,K_k;\cdot)=
V_{\m}(K_1,\ldots,K_k)\,\sigma(\cdot), \quad \m\in\mix(k,0),k\geq 2.
\end{align*}
The translative formulas imply formulas for Boolean models (see \cite[Corollary 4.1.4]{Hoerr}) of the form
\begin{align}\label{bm-areameas}
\overline{S}_j(Z,\cdot)=\mathrm{e}^{- \overline{V}_n(X)}\sum\limits_{\m\in\mix(j)}\frac{(-1)^{|\m|-1}}{|\m|!}
\overline{S}_{\m}(Y,\ldots,Y;\cdot).
\end{align}
This result follows now also from Theorem \ref{bm} using the functional analytic approach described above.

If $Z$ is an isotropic Boolean model, the measure $\overline{S}_j(Z,\cdot)$ is rotation invariant. Since the spherical Lebesgue measure $\sigma$ is up to normalization the unique measure on $\s^{n-1}$ with this property, we have
\begin{align*}
 \overline{S}_j(Z,\cdot)
&= \frac{n\kappa_{n-j}}{\omega_n\binom{n}{j}}\;\overline{V}_j(Z)\,\sigma
\end{align*}
and the formula \eqref{bm-areameas} is equivalent to the corresponding result for the specific intrinsic volume $\overline{V}_j(Z)$ (this also implies a rotation formula for mixed area-type measures).

The local extension of the valuation $K\mapsto S_j(K;C), C\subset \s^{n-1},$ is (up to a constant) given by $K\mapsto\Lambda_j(K;\cdot\times C)$, where $\Lambda_j(K,\dot)$ is the support measure of $K$ introduced in Section \ref{sec:2}.
For the support measures a local translative integral formula similar to \eqref{mixedmeas} holds wich was originally shown in \cite{Hug}.

\subsection{Flag measures}

Now, we use the functional analytic approach just described in a similar situation, for {\it flag measures} of convex bodies.
A general reference for flag measures is the overview article \cite{HTW}. The flag measure we consider in the following are a version of the translation invariant flag area measures which are considered in \cite{GHHRW} and related to the flag area measures in \cite{HTW} via \cite[(2.1)]{GHHRW} and a renormalization.
We first describe the underlying notions concerning flag manifolds. Recall that $G(n,j)$ denotes the Grassmannian of $j$-dimensional subspaces (which we supply with the invariant probability measure $\nu_j$) and define corresponding flag manifolds by
$$
F(n,j) = \{(u,L) : L\in G(n,j), u\in L\cap \s^{n-1}\}
$$
and
$$
F^\perp (n,j) = \{ (u,L) : L\in G(n,j), u\in L^\perp\cap \s^{n-1}\} .
$$
Both flag manifolds carry natural topologies (and invariant Borel probability measures) and $F(n,n-j)$ and $F^\perp(n,j)$ are homeomorphic via the orthogonality map $\rho : (u,L)\mapsto (u, L^\bot)$.  We define a flag measure $\psi_j (K,\cdot)$ as a projection mean of area measures,
\begin{equation}\label{flagprojformula}
\psi_j (K,A) = \int_{G(n,j+1)}\int_{\s^{n-1}\cap L} {\bf 1}\{ (u,L^\bot\vee u)\in A\} S'_j(K|L,du) \nu_{j+1}(dL)
\end{equation}
for a Borel set $A\subset F(n,n-j)$, where $L^\bot\vee u$ is the subspace generated by  $L^\bot$ and the unit vector $u$ and where the prime indicates the area measure calculated in the subspace $L$ (for the necessary measurability properties needed here and in the following, we refer to \cite{Hind}). Using the homeomorphism $\rho$, we can replace $\psi_j (K,\cdot)$ by a measure $\psi_j^\bot (K,\cdot)$ on $F^\perp(n,j)$ given by
\begin{equation}\label{flagprojformula2}
\psi_j^\bot (K,A) = \int_{G(n,j+1)}\int_{\s^{n-1}\cap L} {\bf 1}\{ (u,L\cap u^\bot)\in A\} S'_j(K|L,du) \nu_{j+1}(dL) .
\end{equation}
These two (equivalent) versions of the same flag measure are motivated by the fact that their images under the map $(u,L)\mapsto u$ are in both cases the $j$th order area measure  $S_j(K,\cdot)$. Both measures, $\psi_j (K,\cdot)$ and $\psi_j^\bot (K,\cdot)$ have a local version $\lambda_j(K,\cdot)$, respectively $\lambda_j^\bot (K,\cdot)$, which is obtained by replacing in \eqref{flagprojformula} and \eqref{flagprojformula2} the area measure $S'_j(K|L,\cdot)$ by a multiple of the support measure $\Lambda'_j(K|L,\cdot)$ (see \cite[Theorem 4]{HTW}). In the following, we concentrate on $\psi_j (K,\cdot)$, formulas for the other representation $\psi_j^\bot (K,\cdot)$ follow in a similar way.

The measure $\psi_j (K,\cdot)$ is centered in the first component,
$$
\int_{F(n,n-j)} u \psi_j(K,d(u,L)) = 0,
$$
as follows from the corresponding property of area measures. Let $C(F(n,n-j))$ be the Banach space of continuous functions on $F(n,n-j)$ and choose $f\in C(F(n,n-j))$. Then,
$$\varphi^f_j : K\mapsto \int_{F(n,n-j)} f(u,L) \psi_j(K,d(u,L))$$
is in $\Val_j$. Consequently, we obtain the iterated translative formula
\begin{align}
  \int_{({\mathbb R}^n)^{k-1}}
\varphi^f_j(K_1\cap &(K_2+x_2)\cap \dots \cap (K_k+x_k))\, \lambda^{k-1} (d(x_2,\dots ,x_k))\nonumber \\
& = \sum_{\m\in\mix(j,k)}
\varphi_{\m}^f(K_1,\dots,K_k),\label{flagmeas}
\end{align}
with mixed functionals $\varphi_{\m}^f(K_1,\dots,K_k)$, $\m\in\mix(j,k)$. For fixed bodies $K_1,\dots , K_k$, the left side is a continuous linear functional on $C(F(n,n-j))$, if we let $f$ vary. Namely,
$$f\mapsto \varphi_j^f(K_1\cap (K_2+x_2)\cap \dots \cap (K_k+x_k))$$
is continuous and linear, for each $x_1,\dots ,x_k$, and this carries over to the integral. Replacing $K_1,\dots ,K_k$ by $\alpha_1K_1,\dots ,\alpha_kK_k, \alpha_i>0,$ we use the homogeneity properties of $\varphi^f_{m_1,\dots,m_k}$ to see that the right side is a polynomial in $\alpha_1,\dots ,\alpha_k$. This shows that the coefficients $\varphi^f_{m_1,\dots,m_k}(K_1,\dots,K_k)$ of this polynomial must be continuous linear functionals on $C(F(n,n-j))$, too. By the Riesz representation theorem we obtain unique finite (signed) measures $\psi_{m_1,\dots,m_k}(K_1,\dots,K_k;\cdot)$ on $F(n,n-j)$ such that
$$
\varphi^f_{m_1,\dots,m_k}(K_1,\dots,K_k) = \int_{F(n,n-j))} f(u,L) \psi_{m_1,\dots,m_k}(K_1,\dots,K_k;d(u,L))
$$
for all $f\in C(F(n,n-j))$.  We call them the {\it mixed flag measures}. They are again centered in the first component.

Hence we obtain the iterated translative formula for flag measures,
\begin{align}
  \int_{({\mathbb R}^n)^{k-1}}&
\psi_j(K_1\cap (K_2+x_2)\cap \dots \cap (K_k+x_k),\cdot)\, \lambda^{k-1} (d(x_2,\dots ,x_k))\nonumber \\
& = \sum_{\m\in\mix(j,k)}
\psi_{\m}(K_1,\dots,K_k;\cdot).\label{flagmeas2}
\end{align}

A mean value formula for flag measures of Boolean models $Z$ follows from Theorem \ref{bm}. Since it looks very similar to \eqref{bm-areameas} (but is in fact a generalization), we do not copy it here.

\section{Tensor Valuations and Boolean Models}
\label{sec:6}
Finally, we consider the \textit{Minkowski tensors} $K\mapsto \Phi_j^{r,s}(K)$ which are the central objects of various chapters of this volume.
They are defined as integrals with respect to the support measures. Therefore, Section \ref{sec:6.3} implies the iterated translative formula
\begin{align*}
& \int\limits_{\left(\mathbb{R}^n\right)^{k-1}} \Phi_j^{r,s}(K_1\cap(K_2+x_2)\cap\ldots\cap(K_k+x_k)) \lambda_n^{k-1}(d(x_2,\ldots,x_k))\\
& \quad = \sum\limits_{\m\in\mix(j,k)}\Phi^{r,s}_{\m}(K_1,\ldots,K_k),
\end{align*}
with mixed tensor valuations $(K_1,\ldots,K_k)\mapsto \Phi_{\m}^{r,s}(K_1,\ldots,K_k)$, $\m\in\mix(j,k)$. The mixed tensor valuations $\Phi_{m_1,\ldots,m_k}^{r,s}$ are homogeneous of order $m_1+r$ with respect to the first argument $K_1$ and homogeneous of order $m_i$ with respect to $K_i$ for $i\geq 2$. For $r=0$ the Minkowski tensors are translation invariant. In this case their coordinates are elements of $\Val$ and as an alternative to the above approach via support measures, Theorem \ref{trans4} can be applied directly.

It is convenient to define \textit{local Minkowski tensors} as the tensor-valued signed measures on $\R^n$ given by
\[
\Phi_j^{r,s}(K,A):=c^{r,s}_{n-j}\int_{A\times \s^{n-1}} x^ru^s \Lambda_j(K,d(x,u))
\]
for Borel sets $A\subset \R^n$.
They fulfill the translative formula
\begin{align}
&  \int_{({\mathbb R}^n)^{k-1}}
\Phi^{r,s}_j(K_1\cap (K_2+x_2)\cap \dots \cap (K_k+x_k),A_1\cap (A_2+x_2)\cap \dots\nonumber\\
&\quad\quad\ldots \cap (A_k+x_k))\, \lambda^{k-1} (d(x_2,\dots ,x_k))\nonumber \\
&\quad = \sum_{\m\in\mix(j,k)}
\Phi^{r,s}_{\m}(K_1,\dots,K_k;A_1\times\cdots\times A_k)\label{mixedtensormeas}
\end{align}
with mixed local Minkowski tensors $\Phi_{\m}^{r,s}(K;\cdot)$, $\m\in\mix(j,k)$.
For the translation invariant Minkowski tensors we obtain also density formulas for Boolean models reading
\begin{align*}
\overline{\Phi}^{\,0,s}_j(Z)=\mathrm{e}^{- \overline{V}_n(X)}\sum\limits_{\m\in\mix(j)}\frac{(-1)^{|\m|-1}}{|\m|!}
\overline{\Phi}^{\,0,s}_{\m}(Y,\ldots,Y).
\end{align*}
If $Z$ is an isotropic Boolean model we have
\[ \overline{\Phi}_j^{\,0,s}(Z)=\mathbf{1}\{s\in 2\N_0\}\alpha_{n,j,s} Q^{\frac{s}{2}} \overline{V}_j(Z),
\]
where
\[\alpha_{n,j,s}:=\frac{2}{s!}\,
\frac{\omega_{n-j}\,\omega_{s+n}}{\omega_n\,\omega_{n-j+s}\,\omega_{s+1}}.
\]
The translative formula and the density formulas for Minkowski tensors are contained in \cite{HHKM}. In \cite{HHKM} also mean value formulas for all Minkowski tensors and a parametric example illustrating the usefulness of the Minkowski tensors for the study of a special non-isotropic Boolean model can be found.

\section{Concluding Remarks and Outlook}
\label{sec:7}

As we have mentioned before, the methods and results for the use of valuations with stationary Boolean models $Z$ can be extended to  non-stationary $Z$ under mild regularity assumptions. We describe this situation in the following, but leave out many details for which we refer to the literature.

We recall from Proposition \ref{3.1.1} the definition of the measure $\Theta$ and the Poisson property of $Y$, which shows that $\Theta$ is a translation invariant measure on $\cK^n$, which satifies
$$
\E Y = \Theta .
$$
Since therefore $\Theta (A)$, for a Borel set $A\subset \cK^n$, describes the mean number of particles from $Y$ which fall into $A$, $\Theta$ is called the {\it intensity measure} of $Y$. For the discussion of the non-stationary case, we allow Poisson particle processes $Y$ on $\cK^n$, where the intensity measure $\Theta := \E Y$ on $\cK^n$ is no longer translation invariant, but is absolutely continuous with respect to a translation invariant measure. We call such a measure {\it translation regular}. It then follows that
$$
\Theta (A) = \int_{\cK^n_c}\int_{\R^n} {\bf 1}_A(K+x)\eta (K,x)\lambda_n(dx){\bf Q}(dK)
$$
for some probability measure ${\bf Q}$ on $\cK^n_c$ and a measurable function $\eta\ge 0$ on ${\cK^n_c}\times\R^n$ (see \cite[(11.1)]{SW}). In general, $\bf Q$ and $\eta$ are not uniquely determined by $\Theta$, but they are if $\eta$ does not depend on $K$, hence can be considered as a function on $\R^n$ alone. We will assume this throughout the following and refer to \cite[Section 11.1]{SW} and \cite{W2015b}, for the more general situation. Then, $\eta$ is called the {\it intensity function} and $\bf Q$ the {\it distribution of the typical grain} of the Poisson particle process $Y$. The interpretation is similar to the stationary case. Points in space are distributed according to the intensity function $\eta$ (by a Poisson process $X_0$ in $\R^n$ with intensity measure $\int \eta d\lambda_n$). Then convex bodies are attached to the points independently and with distribution $\bf Q$.

Let now $\varphi\in\Val$ have a local extension $\Phi$. Then $\Phi (Z,\cdot)$ is a signed Radon measure (defined on bounded Borel sets of $\R^n$) which is absolutely continuous to the Lebesgue measure $\lambda_n$. We denote its (almost everywhere existing) density by $\overline\varphi (Z,\cdot)$ (this is a measurable function on $\R^n$). Then we have, as a generalization of Theorem \ref{Th2},
\begin{align*}
\overline\varphi (Z,z)
 = \sum_{k=1}^\infty \frac{(-1)^{k-1}}{k!}&\int_{\cK^n_c}\cdots \int_{\cK^n_c} \int_{(\R^n)^k}\eta(z-x_1)\cdots\eta (z-x_k)\\
&\quad\times \Phi_{(k)}(K_1,\dots,K_k;d(x_1,\dots ,x_k))\,\bQ(dK_1)\cdots \bQ(dK_k)
\end{align*}
where the measure $\Phi_{(k)}(K_1,\dots,K_k;\cdot)$ is given by
\begin{align}\label{itint-gen}
&\Phi_{(k)}(K_1,\dots,K_k;A_1\times\dots\times A_k)\nonumber\\
&\ := \int_{(\R^n)^{k-1}}\Phi (K_1\cap (K_2+x_2)\cap\dots\cap(K_k+x_k),A_1\cap (A_2+x_2)\cap\dots\cap(A_k+x_k))\nonumber \\
&\quad\quad \times \lambda_n^{k-1}(d(x_2,\dots ,x_k)) \, ,
\end{align}
for Borel sets $A_1,\dots ,A_k\subset \R^n$. It is remarkable that, in this non-stationary situation, still an iterated translative integral shows up. Using \eqref{ittrans2}, we can now proceed as in Section 5.2 and obtain
$$
\overline\varphi_n(Z,z) = c_n\left(1-\e^{-\overline V_n(Y,z)} \right),
$$
$$
\overline\varphi_{n-1}(Z,z) = \e^{-\overline V_n(Y,z)}\overline\varphi_{n-1}(Y,z) ,
$$
and
\begin{align}\label{BM-j-inst}
\overline\varphi_j(Z,z)
&=\e^{-\overline V_n(Y,z)} \sum_{\m\in\mix(j)}\frac{(-1)^{|\m|-1}}{|\m|!}
\overline\varphi_{\m}(Y,\dots,Y,z,\dots ,z),
\end{align}
for $j=0,\dots ,n-2$ and $\lambda_n$-almost all $z\in\R^n$. Here, the mean values for $Y$ are defined by
$$
\overline V_n(Y,z) =\int_{\cK^n_c}\int_{\R^n} \eta (z-x) \lambda_n(dx){\bf Q}(dK)
$$
and
\begin{align*}
\overline \varphi_{\m}(Y,\dots,Y,z,\dots ,z) =&\int_{\cK^n_c}\cdots \int_{\cK^n_c}\int_{(\R^n)^k} \eta (z-x_1)\cdots \eta (z-x_k) \\
&\quad\times \Phi_{\m}(K_1,\dots,K_k; d((x_1,\dots ,x_k)){\bf Q}(dK_1)\cdots {\bf Q}(dK_k),
\end{align*}
see \cite[Theorem 6.2]{W2015b}.

Specializing to the examples discussed in Section 6, we obtain from \eqref{BM-j-inst}  formulas for various  geometric mean values for general Boolean models.
In particular, for the translation invariant local Minkowski tensors we obtain the formulas
$$
\overline\Phi_{n-1}^{\,0,s}(Z,z) = \e^{-\overline V_n(Y,z)}\overline\Phi^{\,0,s}_{n-1}(Y,z) ,
$$
and
\begin{align}
\overline\Phi^{\,0,s}_j(Z,z)
&=\e^{-\overline V_n(Y,z)} \sum_{\m\in\mix(j)}\frac{(-1)^{|\m|-1}}{|\m|!}
\overline\Phi^{\,0,s}_{\m}(Y,\dots,Y,z,\dots ,z),
\end{align}
for $j=0,\dots ,n-2$, $s\in\N_0$ and $\lambda_n$-almost all $z\in\R^n$.

We conclude this article with an outlook on the recent development of applying harmonic intrinsic volumes in the study of stationary non-isotropic Boolean models.
Harmonic intrinsic volumes are integrals of spherical polynomials with respect to the area measures $S_j(K;\cdot) =c_{n,j}\Lambda_j(K,\R^d\times \cdot)$, $j\in\{0,\ldots,n-1\}$.
Let $\mathbf{S}_l$ denote the space of spherical harmonics (i.e. homogeneous spherical polynomials $p$ with $\Delta p=0$) of degree $l$ and let $D(n,l)$ be the dimension of $\mathbf{S}_l$. Let $Y_{l,1},\ldots,Y_{l,D(n,l)}$ be an orthonormal basis of $\mathbf{S}_l$ with respect to the $L^2$-scalar product with the measure $\omega_n^{-1}\sigma$. Then, \textit{harmonic intrinsic volumes} are defined by
\[
V_j^{l,p}(K):=c_{n,j}\int\limits_{S^{n-1}}Y_{l,p}(u)S_j(K,du),
\]
where
\[
c_{n,j}:=\binom{n}{j}\frac{1}{n\kappa_{n-j}}.
\]
The harmonic intrinsic volumes $V_j^{l,p}$ are elements of $\Val$. Furthermore it holds
\[V_j^{0,1}=V_j,\]
i.e. the usual intrinsic volumes are contained in the collection of harmonic intrinsic volumes.
They fulfill an interesting rotation formula
\[
\int\limits_{SO_d}V_j^{l,p}(\vartheta K)\nu(d\vartheta)=
\begin{cases}
V_j(K),& (l,p)=(0,1),\\
0,& \text{otherwise}.
\end{cases}
\]

Since the harmonic intrinsic volumes are integrals with respect to the area measures, Section \ref{sec:6.3} implies the iterated translative formula,
\begin{align*}
& \int\limits_{\left(\mathbb{R}^n\right)^{k-1}} V_j^{l,p}(K_1\cap(K_2+x_2)\cap\ldots\cap(K_k+x_k)) \lambda_n^{k-1}(d(x_2,\ldots,x_k))\\
& \quad = \sum\limits_{\m\in\mix(j,k)}V^{l,p}_{\m}(K_1,\ldots,K_k).
\end{align*}
Consequently, also density formulas for Boolean models are obtained reading
\begin{align*}
\overline{V}^{\,l,p}_j(Z)=\mathrm{e}^{- \overline{V}_n(X)}\sum\limits_{\m\in\mix(j)}\frac{(-1)^{|\m|-1}}{|\m|!}
\overline{V}^{\,l,p}_{\m}(Y,\ldots,Y).
\end{align*}
If $Z$ is an isotropic Boolean model, we have
\[ \overline{V}_j^{\,l,p}(Z)=
\begin{cases} \overline{V}_j(Z),& (l,p)=(0,1),\\
0,& \text{otherwise},
\end{cases}
\]
a property which already indicates that the harmonic intrinsic volumes are particularly useful for non-isotropic Boolean models. This turns out to be true, if we consider a Boolean model where the grain distribution $\bf Q$ is \textit{rotation regular}, i.e. satisfies
$$
\mathbf{Q} (A) = \int_{\cK^n_c}\int_{SO_n} {\bf 1}_A(\vartheta K)\eta (K,\vartheta)\nu(d\vartheta){\tilde{\bf Q}}(dK)
$$
for some rotation invariant probability measure ${\tilde{\bf Q}}$ on $\cK^n_c$ and a measurable function $\eta\ge 0$ on ${\cK^n_c}\times SO_n$. The measure $\tilde{\bf Q}$ is unique and the function $\eta$ is unique $\tilde{\bf Q}\otimes\nu$-everywhere under the additional assumptions
\[\int\limits_{SO_n}\eta(K,\vartheta)\nu(d\vartheta)=1 \quad\text{ and }\quad
\eta(\sigma K,\vartheta)=\eta(K,\vartheta\sigma),
\]
for $K\in \cK^n_c,\vartheta,\sigma\in SO_n.$
It was recently shown in \cite{Hoerr} that in two and three dimensions, for a stationary Boolean model with rotation regular grain distribution, the intensity can be expressed as a series of products of the densities $\overline{V}_j^{l,p}(Z)$ of the harmonic intrinsic volumes. For the proofs and the definition of the constants in the following two theorems we refer to \cite{Hoerr}.
\begin{theorem}
In two dimensions, the intensity $\gamma$ has the series representation
\begin{align*}
\gamma&=\rho\,\overline{V}_0(Z)+\rho^2\,
\sum\limits_{l,m=0}^\infty\sum\limits_{p=1}^{D(2,l)}\sum\limits_{q=1}^{D(2,m)} \; c_{l,m}^{p,q}\; \overline{V}_1^{\,l,p}(Z)\,\overline{V}_1^{\,m,q}(Z)
\end{align*}
with some constants $c_{l,m}^{p,q}\in\R$ and
\[
\rho:= \frac{1}{1-\overline{V}_2(Z)}.\]
\end{theorem}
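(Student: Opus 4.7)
Specialize Theorem~\ref{bm} to $n=2$, $j=0$. Any $\m=(m_1,\dots,m_s)\in\mix(0)$ must satisfy $m_i\in\{0,1\}$, $1\le s\le 2$, and $m_1+\cdots+m_s=2(s-1)$, which forces $\m=(0)$ at $s=1$ and $\m=(1,1)$ at $s=2$. Hence Theorem~\ref{bm} collapses to
\[
\overline V_0(Z)=e^{-\overline V_2(Y)}\!\left[\,\overline V_0(Y)-\tfrac{1}{2}\overline V_{1,1}(Y,Y)\right].
\]
Using $\overline V_0(Y)=\gamma$ and $e^{-\overline V_2(Y)}=1-\overline V_2(Z)=1/\rho$ (from the top-dimensional case of Theorem~\ref{bm}), this rearranges to
\[
\gamma=\rho\,\overline V_0(Z)+\tfrac{1}{2}\,\overline V_{1,1}(Y,Y).
\]
The remaining task is to expand $\overline V_{1,1}(Y,Y)$ as a quadratic form in the $\overline V_1^{l,p}(Z)$.

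The second step is Fourier analysis on $S^1$. From Section~6.1, $V_{1,1}(K,M)=2V(K,-M)$, and the mixed area satisfies $V(K,-M)=\tfrac{1}{2}\int_{S^1}h(-M,u)\,S_1(K,du)$. Integration by parts against the defining relation $S_1(K,d\theta)=(h_K+h_K'')\,d\theta$ (valid distributionally for every convex body) yields, for the orthonormal basis $\{Y_{l,p}\}$ of $L^2(\omega_2^{-1}\sigma)$,
\[
2\,V_1^{l,p}(K)=\int_{S^1}Y_{l,p}\,dS_1(K)=2\pi(1-l^2)\,h_K^{l,p},
\]
where $h_K^{l,p}$ denotes the $(l,p)$-Fourier coefficient of $h(K,\cdot)$. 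Combined with $h_{-M}^{l,p}=(-1)^l h_M^{l,p}$ (since $h(-M,\cdot)=h(M,-\cdot)$ and $Y_{l,p}(-u)=(-1)^l Y_{l,p}(u)$), and with $V_1^{1,p}\equiv 0$ (centroid-zero property of area measures, which absorbs the non-uniqueness of $h_M^{1,p}$ under translations), one obtains the diagonal expansion
\[
V_{1,1}(K_1,K_2)=\sum_{l=0,\,l\ne 1}^{\infty}\sum_{p=1}^{D(2,l)}\alpha_l\,V_1^{l,p}(K_1)\,V_1^{l,p}(K_2),\qquad \alpha_l:=\frac{2(-1)^l}{\pi(1-l^2)}.
\]

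Finally, integrating this identity against $\bQ(dK_1)\bQ(dK_2)$, multiplying by $\gamma^2$, and invoking $\overline V_1^{l,p}(Y)=\rho\,\overline V_1^{l,p}(Z)$ (obtained by applying the $j=n-1$ case of Theorem~\ref{bm} to $\varphi=V_1^{l,p}\in\Val_1$) gives
\[
\overline V_{1,1}(Y,Y)=\rho^2\sum_{l\ne 1}\sum_p \alpha_l\,\overline V_1^{l,p}(Z)^{\,2},
\]
which, inserted into $\gamma=\rho\,\overline V_0(Z)+\tfrac{1}{2}\overline V_{1,1}(Y,Y)$, yields the claim with $c_{l,m}^{p,q}=\tfrac{1}{2}\alpha_l\,\delta_{lm}\delta_{pq}$ (and $c_{l,m}^{p,q}=0$ whenever $l=1$ or $m=1$). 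The main technical hurdle is the double interchange of the infinite Fourier sum with the two integrations against $\bQ$: the expansion $h_M=\sum h_M^{l,p}Y_{l,p}$ converges absolutely only for sufficiently smooth $M$, so for a general grain distribution one either approximates the grains by $C^2$-bodies (using continuity of $V_{1,1}$ on $\cK^2\times\cK^2$) or argues via a Plancherel estimate combined with the integrability condition~\eqref{intcond} in order to dominate uniformly in $\bQ$ and apply Fubini.
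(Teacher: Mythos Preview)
The paper does not prove this theorem; it explicitly defers both the proof and the definition of the constants $c_{l,m}^{p,q}$ to the thesis \cite{Hoerr}. There is therefore no in-paper argument to compare against, and your proposal has to be assessed on its own.

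Your argument is correct and in fact delivers more than the stated theorem: it produces the explicit diagonal form $c_{l,m}^{p,q}=\tfrac{(-1)^l}{\pi(1-l^2)}\,\delta_{lm}\delta_{pq}$ (vanishing for $l=1$), and it nowhere invokes the rotation-regularity hypothesis on $\bQ$ that the paper's surrounding discussion imposes. The reduction via Theorem~\ref{bm} to $\gamma=\rho\,\overline V_0(Z)+\tfrac12\,\overline V_{1,1}(Y,Y)$ is right, as is the identification $V_{1,1}(K,M)=2V(K,-M)$ from Section~6.1 and the use of the $(n{-}1)$-case of Theorem~\ref{bm} to convert $\overline V_1^{\,l,p}(Y)$ into $\rho\,\overline V_1^{\,l,p}(Z)$.

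Your concern about interchanging the Fourier sum with the $\bQ$-integrations is more easily resolved than you indicate. On $S^1$ the orthonormal harmonics satisfy $|Y_{l,p}|\le\sqrt{2}$ uniformly in $(l,p)$, and $S_1(K,S^1)=2V_1(K)$, so $|V_1^{l,p}(K)|\le C\,V_1(K)$ uniformly. Combined with $\sum_{l\ne 1}D(2,l)\,|\alpha_l|<\infty$ (the $\alpha_l$ decay like $l^{-2}$) and with $\int V_1\,d\bQ<\infty$ (a consequence of \eqref{intcond} via Steiner's formula in the plane), dominated convergence and Fubini apply directly to the double $\bQ$-integral. For the pointwise identity $V_{1,1}(K,M)=\sum_{l\ne1}\sum_p\alpha_l\,V_1^{l,p}(K)V_1^{l,p}(M)$ itself, support functions are Lipschitz on $S^1$, so the Fourier series of $h(-M,\cdot)$ converges uniformly and may be integrated term by term against the finite measure $S_1(K,\cdot)$; no $C^2$-approximation is needed.
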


\begin{theorem}
In three dimensions, the intensity $\gamma$ has the series representation
\begin{align*}
\gamma&= \rho\,\overline{V}_0(Z)+\rho^2 \sum\limits_{l,m=0}^\infty\sum\limits_{p=1}^{D(3,l)}\sum\limits_{q=1} ^{D(3,m)}\,d_{l,m}^{p,q}
\overline{V}_1^{\,l,p}(Z)\overline{V}_2^{\,m,q}(Z)\\
&\quad+\rho^3 \sum\limits_{l,m,o=0}^\infty\sum\limits_{p=1}^{D(3,l)}
\sum\limits_{q=1}^{D(3,m)} \sum\limits_{s=1}^{D(3,o)} e_{l,m,o}^{p,q,s}\, \overline{V}_2^{\,l,p}(Z) \overline{V}_2^{\,m,q}(Z)\overline{V}_2^{\,o,s}(Z)
\end{align*}
with some constants $d_{l,m}^{p,q},e_{l,m,o}^{p,q,s}\in \R$ and
\[
\rho:=\frac{1}{1-\overline{V}_3(Z)}.
\]
\end{theorem}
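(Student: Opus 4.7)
The plan is to invert the density formulas of Theorem~\ref{bm}, applied to the harmonic intrinsic volumes in dimension $n=3$, using the rotation-regular structure of $\mathbf{Q}$ to split mixed densities into products of single-body harmonic densities. First I apply Theorem~\ref{bm} at $j=0$ with harmonic index $(l,p)=(0,1)$. Since $V_0^{\,l,p}\equiv 0$ for $(l,p)\neq(0,1)$ and $V_0\equiv 1$ on $\cK^3$, and since $\mix(0)$ in dimension three consists exactly of $(0),(1,2),(2,1),(2,2,2)$, the symmetry of the mixed functionals together with $\mathrm{e}^{-\overline{V}_3(Y)}=1-\overline{V}_3(Z)=\rho^{-1}$ reduces the formula to the seed identity
\[
\gamma = \rho\,\overline{V}_0(Z) + \overline{V}_{1,2}(Y,Y) - \tfrac{1}{6}\overline{V}_{2,2,2}(Y,Y,Y).
\]
Applying Theorem~\ref{bm} at $j=2$ and $j=1$ with arbitrary harmonic index $(l,p)$ provides the companion substitution rules $\overline{V}_2^{\,l,p}(Y)=\rho\,\overline{V}_2^{\,l,p}(Z)$ and $\overline{V}_1^{\,l,p}(Y)=\rho\,\overline{V}_1^{\,l,p}(Z)+\tfrac{1}{2}\overline{V}_{2,2}^{\,l,p}(Y,Y)$, which I will use recursively in the last step.

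The central technical step is a product-decomposition lemma for mixed densities under a rotation-regular grain distribution. Writing the rotation regularity explicitly, $\overline{V}_{j_1,j_2}(Y,Y)$ is a fourfold integral over $\cK^3_c\times\cK^3_c\times SO_3\times SO_3$ of $V_{j_1,j_2}(\vartheta_1K_1,\vartheta_2K_2)\,\eta(K_1,\vartheta_1)\eta(K_2,\vartheta_2)$, and analogously for the triple mixed density. I would expand each density $\eta(K_i,\cdot)$ in the $L^2(SO_3)$-basis of Wigner $D$-functions, use the transformation law $V_j^{\,l,p}(\vartheta K)=\sum_{q}D^l_{pq}(\vartheta^{-1})V_j^{\,l,q}(K)$ (which follows from the rotation-equivariance of spherical harmonics combined with the area-measure representation from Section~\ref{sec:6.3}), and apply orthogonality of Wigner functions and a Clebsch-Gordan decomposition. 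The outcome is a bilinear expansion
\[
\overline{V}_{j_1,j_2}(Y,Y)=\sum_{l,m,p,q}A^{p,q}_{l,m}(j_1,j_2)\,\overline{V}_{j_1}^{\,l,p}(Y)\,\overline{V}_{j_2}^{\,m,q}(Y)
\]
with constants $A^{p,q}_{l,m}(j_1,j_2)$ and a corresponding trilinear expansion of $\overline{V}_{2,2,2}(Y,Y,Y)$ with constants $B^{p,q,s}_{l,m,o}$, purely in terms of single-body harmonic densities of~$Y$.

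Once this lemma is available the final assembly is algebraic. Substituting $\overline{V}_2^{\,l,p}(Y)=\rho\,\overline{V}_2^{\,l,p}(Z)$ converts $-\tfrac{1}{6}\overline{V}_{2,2,2}(Y,Y,Y)$ into a pure $\rho^3$ series in the $\overline{V}_2^{\,\cdot,\cdot}(Z)$. For the $\overline{V}_{1,2}(Y,Y)$ term I first reuse the lemma with $(j_1,j_2)=(2,2)$ to eliminate $\overline{V}_{2,2}^{\,l,p}(Y,Y)$ from the expression for $\overline{V}_1^{\,l,p}(Y)$, so that $\overline{V}_1^{\,l,p}(Y)$ becomes a $\rho$-term in $\overline{V}_1^{\,l,p}(Z)$ plus a $\rho^2$-term quadratic in the $\overline{V}_2^{\,\cdot,\cdot}(Z)$. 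After multiplication by $\rho\,\overline{V}_2^{\,m,q}(Z)$, the linear part produces the stated $\rho^2\sum d^{p,q}_{l,m}\,\overline{V}_1^{\,l,p}(Z)\overline{V}_2^{\,m,q}(Z)$ piece, and the quadratic part merges with the $\overline{V}_{2,2,2}$-contribution into the $\rho^3$ series (with the coefficients $e^{p,q,s}_{l,m,o}$ redefined accordingly).

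The main obstacle is the product-decomposition lemma: one must produce an explicit rotation-equivariant diagonalization of the bilinear and trilinear translative mixed densities in the harmonic intrinsic volume basis, and verify absolute convergence of the resulting iterated series. Convergence should follow from the $L^2(SO_3)$-expansion of $\eta(K,\cdot)$ together with Parseval-type estimates, exploiting the covariance condition $\eta(\sigma K,\vartheta)=\eta(K,\vartheta\sigma)$, which lets one transfer the rotation integrals onto the spherical harmonic factor inside $V_j^{\,l,p}(\vartheta K)$ and keeps the pairing with the area measures of $K$ finite under the integrability assumption on $\tilde{\mathbf{Q}}$.
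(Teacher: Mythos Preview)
The paper does not prove this theorem; immediately before stating it, the authors write that ``for the proofs and the definition of the constants in the following two theorems we refer to \cite{Hoerr}''. So there is no in-paper argument to compare against, and your proposal has to be assessed on its own merits.

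Your inversion scheme is correct and is the natural route. The seed identity
\[
\gamma \;=\; \rho\,\overline{V}_0(Z) \;+\; \overline{V}_{1,2}(Y,Y) \;-\; \tfrac{1}{6}\,\overline{V}_{2,2,2}(Y,Y,Y)
\]
follows exactly as you say from \eqref{DensityFormIntVol} at $j=0$ in dimension three (using $\mix(0)=\{(0),(1,2),(2,1),(2,2,2)\}$ and symmetry), and the substitution rules $\overline{V}_2^{\,l,p}(Y)=\rho\,\overline{V}_2^{\,l,p}(Z)$ and $\overline{V}_1^{\,l,p}(Y)=\rho\,\overline{V}_1^{\,l,p}(Z)+\tfrac{1}{2}\overline{V}_{2,2}^{\,l,p}(Y,Y)$ are likewise correct consequences of Theorem~\ref{bm} applied to harmonic intrinsic volumes.

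The genuine gap is in your product-decomposition lemma. The mechanism you propose --- expand $\eta(K,\cdot)$ in Wigner $D$-functions, invoke the transformation law $V_j^{\,l,p}(\vartheta K)=\sum_q D^l_{pq}(\vartheta^{-1})V_j^{\,l,q}(K)$, then apply orthogonality and Clebsch--Gordan --- does not by itself force the factorization you want. That transformation law is a statement about \emph{single-body} harmonic intrinsic volumes; it tells you nothing about $V_{j_1,j_2}(\vartheta_1 K_1,\vartheta_2 K_2)$. After reducing by rotation invariance to $V_{j_1,j_2}(K_1,\sigma K_2)$ and pairing with the Wigner expansion of $\eta$, what survives are the Fourier coefficients $\int_{SO_3} V_{j_1,j_2}(K_1,\sigma K_2)\,D^l_{qr}(\sigma)\,\nu(d\sigma)$, and these are still joint functionals of $(K_1,K_2)$ with no a priori reason to split as a product of a function of $K_1$ and a function of $K_2$. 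Without such a splitting you cannot write $\overline{V}_{j_1,j_2}(Y,Y)$ as a series in $\overline{V}_{j_1}^{\,l,p}(Y)\,\overline{V}_{j_2}^{\,m,q}(Y)$ with \emph{universal} constants, which is exactly what the statement of the theorem requires.

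What is actually needed, and what you have skipped, is a prior structural input: each of the specific translative mixed functionals $V_{1,2}$, $V_{2,2}^{\,l,p}$ and $V_{2,2,2}$ that appear in three dimensions must be shown to admit an integral representation against the product of the relevant area measures (equivalently, a convergent expansion in products of harmonic intrinsic volumes of the individual bodies) with an explicit kernel on a product of spheres. Once such kernels are available, expanding \emph{them} in spherical harmonics yields the bilinear and trilinear series directly, the constants $d^{p,q}_{l,m}$ and $e^{p,q,s}_{l,m,o}$ are read off from the harmonic coefficients of the kernels, and the rotation-regular hypothesis on $\mathbf{Q}$ is used to control convergence of the resulting density series. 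Supplying these area-measure representations of the mixed functionals is the real technical content of the argument in \cite{Hoerr}; your outline treats it as if it were automatic.
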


 These representations of the intensity can be seen as a generalization of the results by Miles and Davy for isotropic Boolean models from 1976 which were mentioned in the introduction and described in Section \ref{Sec:IsotropicCase}.
The article \cite{Hoerr2} will also contain applications of the series representation in Theorem 6 to specific examples of Boolean models. 

Harmonic intrinsic volumes are real-valued functionals but they are closely related to tensor-valued functionals. For the corresponding Minkowski tensors, see \cite{Kapfer, Mickel} and \cite{Klatt}.

\end{document}